\setlist[enumerate]{leftmargin=.5in}
\setlist[itemize]{leftmargin=.5in}
\definecolor{frenchblue}{rgb}{0.0, 0.45, 0.73}
\algnewcommand{\LineComment}[1]{\State {\color{frenchblue}\(\triangleright\) #1}}
\DeclareMathOperator{\diag}{diag}
\def\0{\mathbf{0}}
\def\1{\mathbf{1}}
\def\b{\mathbf{b}}
\def\f{\mathbf{f}}
\def\g{\mathbf{g}}
\def\h{\mathbf{h}}
\def\p{\mathbf{p}}
\def\r{\mathbf{r}}
\def\v{\mathbf{v}}
\def\x{\mathbf{x}}
\def\y{\mathbf{y}}
\def\z{\mathbf{z}}
\def\R{\mathbb{R}}
\def\A{\mathcal{A}}
\def\E{\mathcal{E}}
\def\F{\mathcal{F}}
\def\M{\mathcal{M}}
\def\S{\mathcal{S}}
\def\T{\mathcal{T}}
\def\V{\mathcal{V}}
\def\I{\mathtt{I}}
\def\B{\mathtt{B}}
\def\d{\mathrm{d}}
\def\D{\mathbb{D}}
\def\btheta{\boldsymbol{\theta}}
\newcommand{\argmin}{\operatornamewithlimits{argmin}}
\newtheorem{theorem}{Theorem}
\newtheorem{corollary}{Corollary}
\newtheorem{lemma}{Lemma}
\title{Convergent Authalic Energy Minimization for Disk Area-Preserving Parameterizations}
\author{
  Shu-Yung Liu\\
  \href{mailto:lii227857@gmail.com}{lii227857@gmail.com}
  \and
  Mei-Heng Yueh\\
  \href{mailto:yue@ntnu.edu.tw}{yue@ntnu.edu.tw}
}
\date{Department of Mathematics\\ National Taiwan Normal University}
\begin{document}
\begin{sloppypar}

\maketitle

\begin{abstract}
An area-preserving parameterization is a bijective mapping that maps a surface onto a specified domain and preserves the local area. This paper formulates the computation of disk area-preserving parameterization as an authalic energy minimization (AEM) problem and proposes a novel preconditioned nonlinear conjugate gradient method for the AEM with guaranteed theoretical convergence. Numerical experiments indicate that our new approach has significantly improved area-preserving accuracy and computational efficiency compared to another state-of-the-art algorithm. Furthermore, we present an application of surface registration to illustrate the practical utility of area-preserving mappings as parameterizations of surfaces.
\end{abstract}

\section{Introduction}

An area-preserving parameterization, commonly called an authalic or equiareal parameterization, is a mapping on a surface that preserves the local area up to a global scaling factor. The factor is precisely the ratio between the total area of the surface and that of the specified target domain.
Classical and state-of-the-art methods for area-preserving mapping include the stretch-minimizing method \cite{SaSG01,YoBS04}, Lie advection method \cite{ZoHG11}, optimal mass transportation method \cite{DoTa10,ZhSG13,SuCQ16}, diffusion-based method \cite{ChRy18}, and the stretch energy minimization (SEM) \cite{YuLW19,Yueh22}. 
Area-preserving mappings could serve as parameterizations and applied in tasks such as surface resampling, remeshing, and registration \cite{FlHo05,ShPR06,HoLe07,LaLu14,LuLY14,YoMY14,YuLW17}. They can also serve as boundary parameterizations in image preprocessing for brain tumor segmentation \cite{YuLL19,YuLL20,LiJY21,LiLH22}.
In particular, it has been demonstrated that the fixed-point iteration of the SEM \cite{YuLW19} for computing area-preserving mapping is more efficient and accurate than other state-of-the-art algorithms \cite{YoBS04,ZhSG13}. 
Moreover, a recent study has established the equivalence between stretch energy minimizers and area-preserving mappings \cite{Yueh22}, which further reinforces the theoretical support of SEM. 
Specifically, under the assumption that the total area of the domain and the image of mappings are the same, the minimal value of the stretch energy is the value of the image area, and the equality holds if and only if the mapping is area-preserving \cite{Yueh22}. 
Given this fact, it is natural to define the authalic energy as the difference between the stretch energy and the image area. This ensures that a mapping is area-preserving if and only if it has zero authalic energy.
Related previous works either consider the case that the boundary mapping is prescribed \cite{Yueh22} or assume the image area is constant \cite{YuLW19} so that it suffices to minimize the stretch energy when computing area-preserving parameterizations.
However, when the boundary vertices are allowed to glide along the unit circle, the exact image shape is a polygon with vertices on the unit circle so that the precise area of the image also depends on the boundary mapping, not the constant $\pi$. 
Due to this, we develop a modified authalic energy minimization (AEM) algorithm for computing disk area-preserving parameterizations.

\subsection{Contribution}

In this paper, we propose an efficient algorithm of the modified AEM to compute area-preserving parameterizations of simply connected open surfaces. The modification of the objective energy functional aims to remove the assumption of constant total image area. The contributions of this paper are as follows:
\begin{itemize}
\item[(i)] We formulate the disk area-preserving parameterization problem as a modified AEM problem and develop an associated preconditioned nonlinear conjugate gradient (CG) method for solving the problem. 
\item[(ii)] We remove the constant total image area assumption of \cite[Theorem 3.3]{Yueh22} and provide a generalized consequence that the mappings with the theoretical minimal stretch energy are area-preserving and vice versa.
\item[(iii)] We rigorously prove the convergence of the proposed method of the modified AEM for computing area-preserving parameterizations.
\item[(iv)] Numerical comparisons with another state-of-the-art algorithm are demonstrated to validate the significantly improved effectiveness of our new method in terms of area-preserving accuracy and computational efficiency.
\item[(v)] Application to area-preserving registration mappings between surfaces is demonstrated to show the utility of the proposed algorithm.
\end{itemize}
To the best of our knowledge, this is the first work that proposes a convergent algorithm that minimizes the authalic energy without the constant total image area assumption.

\subsection{Notation}
In this paper, we use the following notation.
\begin{itemize}[label={$\bullet$}]
\item Real-valued vectors are represented using bold letters, such as $\mathbf{f}$.
\item Real-valued matrices are represented using capital letters, such as $L$.
\item $I_n$ denotes the identity matrix of size $n\times n$.
\item Ordered sets of indices are denoted using typewriter letters, such as $\mathtt{I}$ and $\mathtt{B}$.
\item The $i$th entry of the vector $\mathbf{f}$ is denoted by $\mathbf{f}_i$.
\item The subvector of $\mathbf{f}$, composed of $\mathbf{f}_i$ for $i$ in the set of indices $\mathtt{I}$, is represented as $\mathbf{f}_\mathtt{I}$.
\item The $(i,j)$th entry of the matrix $L$ is represented by $L_{i,j}$.
\item The submatrix of $L$, composed of entries $L_{i,j}$ for $i$ in the set of indices $\mathtt{I}$ and $j$ in the set of indices $\mathtt{J}$, is denoted by $L_{\mathtt{I},\mathtt{J}}$.
\item The set of real numbers is represented by $\mathbb{R}$.
\item The $k$-simplex with vertices ${v}_0, \ldots, {v}_k$ is denoted by $\left[{v}_0, \ldots, {v}_k\right]$.
\item The area of a $2$-simplex $\left[{v}_0, v_1, {v}_2\right]$ is represented by $|\left[{v}_0, v_1, {v}_2\right]|$.
\item The area of a simplicial $2$-complex $\S$ is denoted by $|\S|$.
\item The zero and one vectors or matrices of appropriate sizes are denoted by $\mathbf{0}$ and $\mathbf{1}$, respectively.
\end{itemize}

\subsection{Organization of the paper}

The remaining part of this paper is organized as follows: 
In section \ref{sec:2}, we introduce the simplicial surfaces, mappings, and the stretch energy functional. Then, we propose the nonlinear CG method of the AEM for computing area-preserving parameterizations of surfaces in section \ref{sec:3}. The convergence analysis of the proposed iterative method is provided in section \ref{sec:4}. Numerical experiments and a comparison to another state-of-the-art algorithm are demonstrated in section \ref{sec:5}. An application of the AEM algorithm to the computation of area-preserving registration mappings is presented in section \ref{sec:6}. Concluding remarks are provided in section \ref{sec:7}.

\section{Authalic energy of simplicial mappings}
\label{sec:2}

A simplicial surface $\S \subset \mathbb{R}^3$ is the underlying set of a simplicial 2-complex consisted of $n$ vertices
\begin{subequations} \label{eq:mesh}
\begin{equation}
\mathcal{V}(\mathcal{S}) = \left\{ v_\ell = (v_\ell^1, v_\ell^2, v_\ell^3) \in\mathbb{R}^3 \right\}_{\ell=1}^n,
\end{equation}
and $m$ oriented triangular faces 
\begin{equation}
\mathcal{F}(\S) = \left\{ \tau_s = [ {v}_{i_s}, {v}_{j_s}, {v}_{k_s} ] \subset\R^3 \mid {v}_{i_s}, {v}_{j_s}, {v}_{k_s}\in\mathcal{V}(\S) \right\}_{s=1}^m,
\end{equation}
where the bracket $[{v}_{i_s}, {v}_{j_s}, {v}_{k_s}]$ denotes a $2$-simplex, i.e., a triangle with vertices being ${v}_{i_s}, {v}_{j_s}, {v}_{k_s}$. 
The set of edges is denoted as
\begin{equation}
\E(\S) = \left\{ [v_i,v_j] \subset\R^3 \mid [v_i,v_j,v_k]\in\mathcal{F}(\S) \text{ for some $v_k\in\V(\S)$} \right\}.
\end{equation}
\end{subequations}

A simplicial mapping $f:\S\to\R^2$ on $\S$ is a mapping such that $f(\S)$ is the underlying set of the simplicial $2$-complex composed of vertices $\V(f(\S))$, edges $\E(f(\S))$, and triangular faces $\F(f(\S))$ as in \eqref{eq:mesh}. 
In other words, $f$ is a piecewise affine mapping that satisfies
$$
f(v_i) = (f_i^1, f_i^2)^\top, ~ \text{ for every $v_i\in\V(\S)$,}
$$
and
$$
f|_{\tau_s}(v) =  \frac{1}{|\tau_s|} \Big(|[v,v_{j_s},v_{k_s}]| \, f(v_{i_s}) + |[v_{i_s},v,v_{k_s}]| \, f(v_{j_s}) + |[v_{i_s},v_{j_s},v]| \, f(v_{k_s}) \Big),
$$
for every $\tau_s\in\F(\S)$, where $|\tau_s|$ denote the area of the triangle $\tau_s$. 
As a result, the simplicial mapping $f$ can be represented as a vector
$$
\f \equiv \big( (\f^1)^\top, (\f^2)^\top \big)^\top \equiv (f_1^1, \ldots, f_n^1, f_1^2, \ldots, f_n^2)^\top \in\R^{2n}.
$$

The authalic energy for the simplicial mapping $f:\M\to\R^2$ is defined as
\begin{subequations} \label{eq:E_S}
\begin{equation} \label{eq:E_S1}
{E}_A({f}) = E_S(f) - \A(f),
\end{equation}
where $E_S$ is the stretch energy functional defined as
\begin{equation} \label{eq:E_S0}
E_S(f) = \frac{1}{2}\f^\top \big(I_2\otimes L_S({f})\big) \,\f
\end{equation}
with $L_S({f})$ being the weighted Laplacian matrix given by
\begin{equation} \label{eq:L_S}
{[L_S(f)]}_{i,j} =
   \begin{cases}
   -\sum_{[v_i,v_j,v_k]\in\F(\S)} [\omega_S(f)]_{i,j,k}  &\mbox{if $[{v}_i,{v}_j]\in\mathcal{E}(\S)$,}\\[0.1cm]
   -\sum_{\ell\neq i} [L_S(f)]_{i,\ell} &\mbox{if $j = i$,}\\[0.1cm]
   0 &\mbox{otherwise,}
   \end{cases}
\end{equation}
$\omega_S$ is the modified cotangent weight defined as
\begin{equation} \label{eq:omega}
[\omega_S(f)]_{i,j,k} = \frac{\cot(\theta_{i,j}^k(f))  \, |f([v_i,v_j,v_k])|}{2|[v_i,v_j,v_k]|}
\end{equation}
in which $\theta_{i,j}^k(f)$ is the angle opposite to the edge $[f(v_i),f(v_j)]$ at the point $f(v_k)$ on the image $f(\S)$, as illustrated in Figure \ref{fig:cot}, and $\A$ is the area measurement of the image of $f$ given by $\A(f)=|f(\S)|$.  
\end{subequations}
\begin{figure}
\centering
\begin{tikzpicture}[thick,scale=1.2]
\coordinate (v_i) at (0,0);
\coordinate (v_j) at (0,2);
\coordinate (v_k) at (2,1);
\coordinate (v_l) at (-2,1);
\filldraw[yellow!20] (v_i) -- (v_j) -- (v_k);
\filldraw[yellow!20] (v_i) -- (v_j) -- (v_l);
\pic[draw, ->, "$\theta_{i,j}^k(f)$", angle eccentricity=2.05, angle radius=0.6cm]{angle = v_i--v_l--v_j};
\pic[draw, ->, "$\theta_{j,i}^\ell(f)$", angle eccentricity=2.05, angle radius=0.6cm]{angle = v_j--v_k--v_i};
\draw{
(v_i) -- (v_j) -- (v_k) -- (v_i) -- (v_l) -- (v_j)
};
\tikzstyle{every node}=[circle, draw, fill=cyan!20,
                        inner sep=1pt, minimum width=2pt]
\draw{
(0,0) node{$f_i$}
(0,2) node{$f_j$}
(2,1) node{$f_\ell$}
(-2,1) node{$f_k$}
};
\end{tikzpicture}
\label{fig:cot}
\caption{An illustration of the cotangent weight defined on the image of $f$. Here, $f_i$, $f_j$, $f_k$ and $f_\ell$ denote $f(v_i)$, $f(v_j)$, $f(v_k)$ and $f(v_\ell)$, respectively.}
\end{figure}
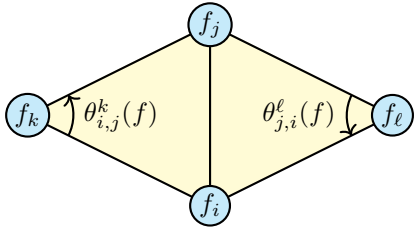

Under the assumption that the total area remains unchanged, i.e., $\A(f) = |\S|$, the authalic energy $E_A(f) \geq 0$, and the equality holds if and only if $f$ is area-preserving \cite[Corollary 3.4]{Yueh22}. 
When the target domain is a unit disk with boundary vertices mapped to the unit circle, the exact image $f(\S)$ is a polygon that depends on $f$. Consequently, the condition $\A(f) = |\S|$ would not always hold. 
To remedy this issue, we define the normalized authalic energy functional as
\begin{subequations} \label{eq:E_A_mod}
\begin{equation} \label{eq:E_A_moda}
\widetilde{E}_A({f}) = \frac{1}{2} \f^\top \big(I_2\otimes \widetilde{L_S}({f})\big) \,\f - \A(f),
\end{equation}
where $\widetilde{L_S}({f})$ being the weighted Laplacian matrix given by
\begin{equation} \label{eq:L_S_mod}
{[\widetilde{L_S}(f)]}_{i,j} =
   \begin{cases}
   -\sum_{[v_i,v_j,v_k]\in\F(\S)} [\widetilde{\omega_S}(f)]_{i,j,k}  &\mbox{if $[{v}_i,{v}_j]\in\mathcal{E}(\S)$,}\\[0.1cm]
   -\sum_{\ell\neq i} [\widetilde{L_S}(f)]_{i,\ell} &\mbox{if $j = i$,}\\[0.1cm]
   0 &\mbox{otherwise,}
   \end{cases}
\end{equation}
$\widetilde{\omega_S}$ is the normalized weight defined as
\begin{equation} \label{eq:omega_mod}
[\widetilde{\omega_S}(f)]_{i,j,k} = \frac{\cot(\theta_{i,j}^k(f))  \, |f([v_i,v_j,v_k])|}{2|[v_i,v_j,v_k]| \, \frac{|f(\S)|}{|\S|}} = \frac{|\S|}{|f(\S)|}[\omega_S(f)]_{i,j,k}.
\end{equation}
\end{subequations}
The scaling factor to $|\tau|$ in \eqref{eq:omega_mod} enforces $\sum_{\tau\in\F(\S)}|\tau| \frac{|f(\S)|}{|\S|} = |f(\S)|$ so that the assumption of \cite[Corollary 3.4]{Yueh22} would always hold. Equivalently, 
$\widetilde{L_S}({f}) = \frac{|\S|}{|f(\S)|} {L_S}({f})$, and thus,
\begin{equation} \label{eq:tilde_E}
\widetilde{E}_A({f}) = \frac{|\S|}{|f(\S)|} E_S(f) - \A(f).
\end{equation}

To guarantee area-preserving mappings minimize the modified energy functional \eqref{eq:tilde_E}, we prove the following theorem, which is a generalization of \cite[Theorem 3.3]{Yueh22} without assuming the constant total image area.

\begin{theorem}
The modified authalic energy \eqref{eq:E_A_mod} satisfies $\widetilde{E}_A({f}) \geq 0$, and the equality holds if and only if $f$ is area-preserving.
\end{theorem}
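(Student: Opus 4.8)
The plan is to reduce the claim to \cite[Corollary 3.4]{Yueh22} by absorbing the normalization in \eqref{eq:omega_mod} into a rescaling of the domain. By \eqref{eq:tilde_E} it suffices to study $\widetilde{E}_A(f)=\frac{|\S|}{|f(\S)|}E_S(f)-\A(f)$, and I would first observe that the only domain-dependent quantity entering the normalized weight \eqref{eq:omega_mod} is the scaled face area $|\tau|\,\frac{|f(\S)|}{|\S|}$ in its denominator. Assuming $f$ is non-degenerate so that $|f(\S)|>0$, I would introduce the surface $\S'$ obtained from $\S$ by the uniform scaling of linear factor $\lambda=\sqrt{|f(\S)|/|\S|}$, for which every face satisfies $|\tau'|=\lambda^2|\tau|=\frac{|f(\S)|}{|\S|}\,|\tau|$ and hence $|\S'|=|f(\S)|$. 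Because this scaling leaves the image mesh $f(\S')=f(\S)$, the image angles $\theta^k_{i,j}(f)$, and the image face areas $|f(\tau)|$ untouched, comparing \eqref{eq:omega} with \eqref{eq:omega_mod} would show that the normalized weight $\widetilde{\omega_S}(f)$ on $\S$ is exactly the stretch weight $\omega_S(f)$ of \eqref{eq:omega} evaluated on $\S'$. Consequently $\widetilde{L_S}(f)$ equals the stretch Laplacian $L_S(f)$ computed on $\S'$, and the normalized energy equals the ordinary authalic energy \eqref{eq:E_S1} of $f$ taken over $\S'$, which I write $E_A^{\S'}(f)$; that is, $\widetilde{E}_A(f)=E_A^{\S'}(f)$.

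With this identification in hand, the crucial observation is $\A(f)=|f(\S)|=|\S'|$, so $f\colon\S'\to\R^2$ satisfies the constant-total-area hypothesis of \cite[Corollary 3.4]{Yueh22}. Invoking that corollary on $\S'$ would immediately give $\widetilde{E}_A(f)=E_A^{\S'}(f)\geq 0$, with equality precisely when $f$ is area-preserving as a map out of $\S'$. To conclude, I would transfer the equality case back to $\S$: since $|\tau'|=\frac{|f(\S)|}{|\S|}|\tau|$ for every face while the images coincide, the per-face dilation ratios satisfy $\frac{|f(\tau')|}{|\tau'|}=\frac{|\S|}{|f(\S)|}\,\frac{|f(\tau)|}{|\tau|}$, so one is constant across all faces exactly when the other is. Hence $f$ is area-preserving relative to $\S'$ if and only if it is area-preserving relative to $\S$, which closes the argument.

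The individual steps are light, and I expect the only points needing care to be (i) recording explicitly that the image-side quantities in \eqref{eq:omega_mod} are independent of the domain rescaling, so that the identity $\widetilde{L_S}(f)=L_S(f)|_{\S'}$ is exact rather than approximate, and (ii) the non-degeneracy $|f(\S)|>0$ required to define $\lambda$, with degenerate $f$ either excluded or handled as a limiting case. As a more self-contained alternative that exhibits the underlying mechanism, I would instead start from the face-wise form of the stretch energy, $E_S(f)=\sum_{\tau\in\F(\S)}\frac{|f(\tau)|^2}{|\tau|}$, obtained from \eqref{eq:E_S0}--\eqref{eq:omega} by applying the cotangent--area identity $\cot(\theta^k_{i,j}(f))\,\|f(v_i)-f(v_j)\|^2+\cdots=4|f(\tau)|$ face by face. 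The Cauchy--Schwarz inequality in Engel form then yields
\[
E_S(f)=\sum_{\tau}\frac{|f(\tau)|^2}{|\tau|}\ \geq\ \frac{\big(\sum_\tau|f(\tau)|\big)^2}{\sum_\tau|\tau|}=\frac{|f(\S)|^2}{|\S|},
\]
with equality if and only if $|f(\tau)|/|\tau|$ is constant over all faces, i.e.\ $f$ is area-preserving; substituting into \eqref{eq:tilde_E} would give $\widetilde{E}_A(f)\geq\frac{|\S|}{|f(\S)|}\cdot\frac{|f(\S)|^2}{|\S|}-|f(\S)|=0$ with the same equality case. On this route the main obstacle is just justifying the face-wise identity for $E_S$, together with the convention $\sum_\tau|f(\tau)|=|f(\S)|$ for the complex area, after which the rest is routine.
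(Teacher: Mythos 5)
Your proposal is correct, and in fact contains the paper's own argument as its ``self-contained alternative'': the paper proves the theorem exactly by citing the face-wise identity $E_S(f)=\sum_{\tau\in\F(\S)}|f(\tau)|^2/|\tau|$ from \cite[Lemma 3.1]{Yueh22} and applying the Cauchy--Schwarz inequality in the product form $\sum_\tau\frac{|f(\tau)|^2}{|\tau|}\sum_\tau|\tau|\geq\big(\sum_\tau|f(\tau)|\big)^2=|f(\S)|^2$, with equality precisely when $|f(\tau)|/|\tau|$ is constant; this is your Engel-form computation written slightly differently, and the paper does not re-derive the face-wise identity from the cotangent formula but simply cites it. Your primary route --- rescaling the domain to $\S'$ with linear factor $\lambda=\sqrt{|f(\S)|/|\S|}$ so that $\widetilde{\omega_S}(f)$ on $\S$ becomes $\omega_S(f)$ on $\S'$, $|\S'|=|f(\S)|=\A(f)$, and \cite[Corollary 3.4]{Yueh22} applies verbatim --- is a genuinely different and clean reduction that the paper does not take; it makes transparent that the normalization in \eqref{eq:omega_mod} is nothing but a change of reference metric, at the cost of having to check that the image-side quantities are scale-invariant and that the equality case transfers back (which you do). The direct Cauchy--Schwarz route buys a shorter, self-contained inequality with an explicit equality characterization; the reduction route buys conceptual clarity and reuses the cited corollary as a black box. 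Your caveat about $|f(\S)|>0$ is apt but not a gap relative to the paper, which implicitly assumes non-degeneracy since \eqref{eq:tilde_E} and \eqref{eq:omega_mod} already divide by $|f(\S)|$.
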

\begin{proof}
From \cite[Lemma 3.1]{Yueh22}, we know that the stretch energy \eqref{eq:E_S0} can be reformulated as
$$
E_S(f) = \sum_{\tau\in\F(\S)} \frac{|f(\tau)|^2}{|\tau|}.
$$
By the Cauchy--Schwarz inequality, we obtain
\begin{align*}
E_S(f) \, |\S| &= \sum_{\tau\in\F(\S)} \frac{|f(\tau)|^2}{|\tau|} 
\sum_{\tau\in\F(\S)} |\tau|
\geq \left(\sum_{\tau\in\F(\S)} |f(\tau)| \right)^2 = |f(\S)|^2. 
\end{align*}
That is, 
$$
\frac{|\S|}{|f(\S)|}E_S(f) \geq |f(\S)| = \A(f).
$$
The equality holds if and only if $\frac{|f(\tau)|^2}{|\tau|^2}$ is constant, i.e., $f$ preserves the area.
\end{proof}

In particular, suppose $\S$ is a simply connected open simplicial surface with a boundary $\partial\S$. We denote the index sets of entries of $\f$ correspond to boundary and interior vertices as 
\begin{equation} \label{eq:BI}
\B = \{ b \mid v_b \in \partial\S \} ~ \text{ and } ~ \I = \{1, \ldots, n\}\backslash\B,
\end{equation}
respectively. 
Noting that each entry of $\f_\B$ is located on the unit circle, we can represent the boundary map using polar coordinates as
\begin{equation}
\begin{bmatrix}
\f_\B^1 & \f_\B^2
\end{bmatrix}
=\begin{bmatrix}
\cos\theta_1 & \sin\theta_1 \\
\cos\theta_2 & \sin\theta_2 \\
\vdots & \vdots \\
\cos\theta_{n_\B} & \sin\theta_{n_\B}
\end{bmatrix}
:= \begin{bmatrix}
\cos\btheta & \sin\btheta
\end{bmatrix},
\end{equation}
where $n_\B$ is the number of boundary vertices, $\btheta = (\theta_1, \theta_2, \ldots, \theta_{n_\B})^\top$ with $0=\theta_1 \leq \theta_2 \leq \cdots \leq \theta_{n_\B} \leq 2\pi$. 
Then, the simplicial map $f$ can be represented as a vector
\begin{equation} \label{eq:vecf}
\widehat{\f} = 
\begin{bmatrix}
\f_\I^1 \\
\f_\I^2 \\
\btheta
\end{bmatrix}.
\end{equation}

The image area $\A(f)$ is determined by the boundary map $\begin{bmatrix}
\f_\B^1 & \f_\B^2
\end{bmatrix}$ formulated as
\begin{align}
\A(f) &= \frac{1}{2} \sum_{i=1}^{n_\B} \sin (\theta_{i+1} - \theta_i) \quad\quad (\theta_{n_\B+1}:=\theta_1) \nonumber\\
&= \frac{1}{2}
\begin{bmatrix}
\cos \theta_1\\
\cos \theta_2\\
\vdots\\
\cos \theta_{n_\B}
\end{bmatrix}^{\top}
\begin{bmatrix}
 0 &  1     &        & -1 \\
-1 &  0     & \ddots &    \\
   & \ddots & \ddots &  1 \\
 1 &        & -1     &  0
\end{bmatrix}
\begin{bmatrix}
\sin \theta_1\\
\sin \theta_2\\
\vdots\\
\sin \theta_{n_\B}
\end{bmatrix} := \frac{1}{2} {\f_\B^1}^{\top} D\, \f_\B^2. \label{eq:PolarArea}
\end{align}
Then, by substituting \eqref{eq:PolarArea} into \eqref{eq:tilde_E}, we obtain a reformulation of the normalized authalic energy functional as
\begin{align} \label{eq:E_A_1}
\widetilde{E}_A(\f_\I^1, \f_\I^2, \btheta) 
&= \frac{2|\S|}{{\f_\B^1}^{\top} D\, \f_\B^2} E_S(f) - \frac{1}{2} {\f_\B^1}^{\top} D\, \f_\B^2.
\end{align}
Ultimately, the AEM problem can be explicitly formulated as
\begin{equation} \label{eq:AEM}
\argmin \widetilde{E}_A(\f_\I^1, \f_\I^2, \btheta) ~ \text{ subject to \, $0=\theta_1\leq\theta_2\leq\cdots\leq\theta_{n_\B}\leq 2\pi$}.
\end{equation}

It is known that $\nabla E_S$ and $\nabla_{\btheta} \A(f)$ can be written in neat formulations as
\begin{equation} \label{eq:GradE_S}
\nabla E_S(f) = 2 (I_2\otimes L_S(f)) \, \f,
\end{equation}
and
\begin{equation} \label{eq:GradA}
\nabla_{\btheta} \mathcal{A}(f) = \frac{1}{2} \nabla_{\btheta} {\f_\B^1}^{\top} D\, \f_\B^2 = -\frac{1}{2} ( \diag(\f_\B^1) D\, \f_\B^1 + \diag(\f_\B^2) D\, \f_\B^2), 
\end{equation}
respectively. Detailed derivations can be found in \cite{Yueh22,KuLY21}. 
Then, by the chain rule together with \eqref{eq:GradE_S} and \eqref{eq:GradA}, the gradient of $\widetilde{E}_A$ in \eqref{eq:E_A_1} can be explicitly formulated as
\begin{subequations} \label{eq:GradEA}
\begin{equation}
\nabla \widetilde{E}(\f_{\I}^1, \f_{\I}^2, \btheta) 
\equiv \nabla \widetilde{E}_A(\widehat{\f}) 
= \begin{bmatrix}
\nabla_{\f_{\I}^1} \widetilde{E}_A(\f_{\I}^1, \f_{\I}^2, \btheta) \\
\nabla_{\f_{\I}^2} \widetilde{E}_A(\f_{\I}^1, \f_{\I}^2, \btheta) \\
\nabla_{\btheta} \widetilde{E}_A(\f_{\I}^1, \f_{\I}^2, \btheta)
\end{bmatrix}
\end{equation}
with
\begin{align}
\nabla_{\f_{\I}^s} \widetilde{E}_A(\f_{\I}^1, \f_{\I}^2, \btheta) &= \Big( \frac{2|\S|}{{\f_\B^1}^{\top} D\, \f_\B^2} \Big) \nabla_{\f_{\I}^s} E_S(f) \nonumber \\
&= \Big( \frac{4 |\S|}{{\f_\B^1}^{\top} D\, \f_\B^2} \Big) \,( [L_S(f)]_{\I, \I} \f_{\I}^s + [L_S(f)]_{\I,\B} \f_\B^s ),
\end{align}
for $s=1,2$, and
{\small\begin{align}
\nabla_{\btheta} \widetilde{E}_A(\f_{\I}^1, \f_{\I}^2, \btheta) 
&= \frac{2|\S|}{{\f_\B^1}^{\top} D\, \f_\B^2} 
\nabla_{\btheta} E_S(f) + E_S(f) \nabla_{\btheta} \Big( \frac{2|\S|}{{\f_\B^1}^{\top} D\, \f_\B^2} \Big) - \nabla_{\btheta} \mathcal{A}(f) \nonumber \\
&= \frac{4 |\S|}{{\f_\B^1}^{\top} D\, \f_\B^2}\, \big( \diag(\f_\B^1)( [L_S(f)]_{\B,\I} \f^2_{\I} + [L_S(f)]_{\B, \B} \f_\B^2) \nonumber \\
& \quad\quad\quad\quad~~ - \diag(\f_\B^2)( [L_S(f)]_{\B,\I} \f^1_{\I} + [L_S(f)]_{\B,\B} \f_\B^1) \big) \nonumber \\
&\quad + \left(\frac{1}{2} + \frac{2 |\S| \, E_S(f)}{({\f_\B^1}^{\top} D\, \f_\B^2)^2} \right) (\diag (\f_\B^1) D\, \f_\B^1 + \diag(\f_\B^2) D\, \f_\B^2).
\end{align}}
\end{subequations}
Based on explicit formulae in \eqref{eq:GradEA}, we are able to develop the nonlinear CG method for solving \eqref{eq:AEM}, which will be described in detail in the next section.

\section{Nonlinear CG Method for AEM}
\label{sec:3}

In the previous work \cite{YuLW19}, the image area is assumed to be constant. In that case, minimizing $E_A$ in \eqref{eq:E_S1} is equivalent to minimizing $E_S$ in \eqref{eq:E_S0}.
To find a minimizer of $E_S$, the fixed-point iteration with a boundary update technique was adopted in \cite{YuLW19}, which is formulated as
\begin{subequations} \label{eq:SEM}
\begin{align}
{\f_\I^s}^{(k+1)} &= -[L_S(f^{(k)})]_{\I,\I}^{-1} [L_S(f^{(k)})]_{\I,\B} {\f_\B^s}^{(k)}, \label{eq:SEM_iter} \\[0.1cm]
{\f_\B^s}^{(k+1)} &= -N^{(k+1)} C [L_S(f^{(k)})]_{\B,\B}^{-1} [L_S(f^{(k)})]_{\B,\I} R^{(k+1)} {\f_\I^s}^{(k+1)}, 
\end{align}
where $R^{(k+1)}$ is the inversion operator for $f^{(k+1)}$ given by
\begin{equation}
R^{(k+1)} = \diag\left(\big\|\big( (\f_{\I}^1)_\ell^{(k+1)}, (\f_{\I}^2)_\ell^{(k+1)} \big)\big\|^{-2}\right)_{\ell=1}^{n_\I},
\end{equation}
$n_\I$ is the number of interior vertices, $C$ is the centralization matrix given by
\begin{equation}
C = I_{n_\B} - \frac{1}{n_\B}(\1_{n_\B} \1_{n_\B}^\top),
\end{equation}
$N^{(k+1)}$ is the normalization operator for $\f_\I^{(k+1)}$ given by
\begin{equation}
N^{(k+1)} = \diag\left( \big\| \big[C [L_S(f^{(k)})]_{\B,\B}^{-1} [L_S(f^{(k)})]_{\B,\I} R^{(k+1)} \f_\I^{(k+1)} \big]_\ell \big\|^{-1} \right)_{\ell=1}^{n_\I},
\end{equation}
\end{subequations}
$L_S(f^{(0)}) = L_S(\mathrm{id})$ with $\mathrm{id}$ being the identity map on $\S$, 
and $f|_{\partial\S}^{(0)}$ is an arc-length boundary parameterization.

Numerical experiments in \cite{YuLW19} indicate that the fixed-point iteration \eqref{eq:SEM} of SEM is effective on decreasing the stretch energy \eqref{eq:E_S0}. However, the convergence of \eqref{eq:SEM} is not theoretically guaranteed. 
In addition, the boundary mapping is not guaranteed to be updated in the optimal direction. 
To remedy these drawbacks, we are motivated to develop the nonlinear CG method for the AEM problem \eqref{eq:AEM}.

The concept of the nonlinear CG method for solving nonlinear optimization problems was first introduced by Fletcher and Reeves \cite{FlRe64}. Given the availability of the explicit formula \eqref{eq:GradEA} for the objective nonlinear functional \eqref{eq:E_A_1}, this concept aligns well with our intended objective.

Suppose an initial map $f^{(0)}$ is computed by the fixed-point iteration \eqref{eq:SEM_iter} of SEM with the arc-length parameterization as the fixed boundary map and $N$ iteration steps, e.g., $N=5$ in practice. 
Let $\g^{(0)} = \nabla \widetilde{E}_A(\widehat{\f}^{(0)})$ be the gradient vector defined as \eqref{eq:GradEA} and $\widehat{\f}^{(0)}$ be the vector representation of $f^{(0)}$ as in \eqref{eq:vecf}. 
The map is updated along with the negative gradient direction as
\begin{subequations}\label{eq:CG_direction}
\begin{equation}
\widehat{\f}^{(1)} = \widehat{\f}^{(0)} + \alpha_0 {\p}^{(0)},
\end{equation}
where 
\begin{equation} \label{eq:CG_direction0}
{\p}^{(0)} = -{\g}^{(0)},
\end{equation}
and $\alpha_0$ is a suitable step size. 
The choice of step sizes will be discussed in subsection \ref{subsec:3.1}. 
Next, suppose we have the descent direction ${\p}^{(k)}$. 
The conjugate gradient vector of $E_A$ at $f^{(k+1)}$ is formulated as
\begin{equation} \label{eq:CG_directionK}
{\p}^{(k+1)} = -{\g}^{(k+1)} + \beta_{k+1} {\p}^{(k)}, ~ \textrm{ with $\beta_{k+1} = \frac{{{\g}^{(k+1)}}^\top {\g}^{(k+1)}}{{\g^{(k)}}^\top \g^{(k)}}$.}
\end{equation}
\end{subequations}
The map is then updated along with the conjugate gradient direction as
$$
\widehat{\f}^{(k+1)} = \widehat{\f}^{(k)} + \alpha_k {\p}^{(k)},
$$
where
$$
{\p}^{(k+1)} = -{\g}^{(k+1)} + \beta_{k+1} {\p}^{(k)}
$$
with ${\g}^{(k+1)} = \nabla \widetilde{E}_A(\widehat{\f}^{(k+1)})$ as in \eqref{eq:GradEA}.

\subsection{Quadratic approximation of step sizes}
\label{subsec:3.1}

To estimate the optimal step size $\alpha_k$, we write $\Phi^{(k)}(\alpha) = \widetilde{E}_A(\widehat{\f}^{(k)} + \alpha {\p}^{(k)})$. Then, by the chain rule, 
\begin{align*}
\frac{\d}{\d\alpha}\Phi^{(k)}(\alpha) 
&= {{\p}^{(k)}}^\top \nabla \widetilde{E}_A\big(\widehat{\f}^{(k)} + \alpha {\p}^{(k)}\big). 
\end{align*}
The critical point of the function $\Phi^{(k)}(\alpha)$ satisfies $\frac{\d}{\d\alpha}\Phi^{(k)}(\alpha) = 0$. However, deriving an explicit formulation for $\frac{\d}{\d\alpha}\Phi^{(k)}(\alpha)$ poses a challenge due to the dependence of the matrix $L_S(\widehat{\f}^{(k)} + \alpha {\p}^{(k)})$ on $\alpha$ as well. 
To remedy this drawback, we approximate the function $\Phi^{(k)}$ by a quadratic function 
\begin{equation} \label{eq:Phit}
\widetilde{\Phi}^{(k)}(\alpha) = a_2^{(k)} \alpha^2 + a_1^{(k)} \alpha + a_0^{(k)}
\end{equation}
that satisfies 
\begin{equation} \label{eq:Phi_cond}
\begin{cases}
\widetilde{\Phi}^{(k)}(0) = \Phi^{(k)}(0) = \widetilde{E}_A\big(\widehat{\f}^{(k)}\big), \\
\frac{\d}{\d\alpha}\widetilde{\Phi}^{(k)}(0) = \frac{\d}{\d\alpha}\Phi^{(k)}(0) 
= {{\p}^{(k)}}^\top \nabla \widetilde{E}_A\big(\widehat{\f}^{(k)}\big), \\
\widetilde{\Phi}^{(k)}(\alpha_{k-1}) = \Phi^{(k)}(\alpha_{k-1}) = \widetilde{E}_A\big(\widehat{\f}^{(k)} + \alpha_{k-1} {\p}^{(k)}\big).
\end{cases}
\end{equation}
By \eqref{eq:Phi_cond}, the coefficients $a_0^{(k)}$, $a_1^{(k)}$ and $a_2^{(k)}$ in \eqref{eq:Phit} can be explicitly formulated as 
\begin{align*}
a_0^{(k)} &= \Phi^{(k)}(0), ~~~
a_1^{(k)} = \frac{\d}{\d\alpha}\Phi^{(k)}(0), ~\text{ and }~\\
a_2^{(k)} &= \frac{1}{\alpha_{k-1}^2} \Big( \Phi^{(k)}(\alpha_{k-1}) - \Phi^{(k)}(0) - \alpha_{k-1}\frac{\d}{\d\alpha}\Phi^{(k)}(0) \Big).
\end{align*}
The step size $\alpha_k$ is then chosen to satisfy $\frac{\d}{\d\alpha}\widetilde{\Phi}(\alpha_k) = 0$, which implies
\begin{equation} \label{eq:alpha}
\alpha_k = \frac{-\frac{\d}{\d\alpha}\Phi^{(k)}(0)}{2 a_2^{(k)}} = \frac{-\alpha_{k-1}^2\frac{\d}{\d\alpha}\Phi^{(k)}(0)}{2\big( \Phi^{(k)}(\alpha_{k-1}) - \Phi^{(k)}(0) - \alpha_{k-1}\frac{\d}{\d\alpha}\Phi^{(k)}(0) \big)}.
\end{equation}
Based on observations from numerical experiments, we choose $\alpha_0=2$ as the initial step length in practice.

\subsection{Preconditioner}
\label{sec:3.2}

To accelerate the convergence of the nonlinear CG method, we incorporate a preconditioner
$$
M = \begin{bmatrix}
I_2 \otimes [L_S(f^{(0)})]_{\I,\I} & \\
& [L_S(f^{(0)})]_{\B,\B}
\end{bmatrix}.
$$
Then, the conjugate gradient direction \eqref{eq:CG_direction} is replaced with
\begin{subequations} \label{eq:PCG}
\begin{equation} \label{eq:PCG_0}
\p^{(0)} = -M^{-1} \g^{(0)}
\end{equation}
and
\begin{equation} \label{eq:PCG_k}
\p^{(k+1)} = -M^{-1} \g^{(k+1)} + \beta_{k+1} \p^{(k)}, ~ \textrm{ with $\beta_{k+1} = \frac{{{\g}^{(k+1)}}^\top M^{-1} {\g}^{(k+1)}}{{\g^{(k)}}^\top M^{-1} \g^{(k)}}$.}
\end{equation}
\end{subequations}
In practice, for each block $M_i$ of $M$, $i=1,2$, we compute the preordered Cholesky decomposition
\begin{equation} \label{eq:Chol}
U_i^\top U_i = P_i^\top M_i P_i,
\end{equation}
where $U_i$ is an upper triangular matrix and $P_i$ is the approximate minimum degree permutation matrix \cite{AmDD04}. Then, linear systems of the form $M_i\x = \r$ can be efficiently solved by solving lower and upper triangular systems 
\begin{subequations} \label{eq:LS}
\begin{equation} \label{eq:LS1}
U_i^\top \y = P_i^\top \r \,\text{ and }\, U_i\z = \y.
\end{equation}
Ultimately, the solution to the system is given by 
\begin{equation} \label{eq:LS2}
\x = P_i\z. 
\end{equation}
\end{subequations}

The detailed computational procedure is summarized in Algorithm \ref{alg:PCG}.

\begin{algorithm}[htbp]
\caption{Preconditioned Nonlinear CG method of AEM}
\label{alg:PCG}
\begin{algorithmic}[1]
\Require A simply connected open mesh $\mathcal{S}$.
\Ensure An area-preserving map $f:\mathcal{S}\to\mathbb{D}$.
\State Let $\I$ and $\B$ be defined in \eqref{eq:BI}.
\State Compute the arc-length parameterization $\b$ of $\partial\S$.
\State Define $\f_\B=\b$.
\For {$k=1, \ldots, 5$}
    \State Solve $L_{\I,\I} \f_\I = -L_{\I,\B} \b$.
    \State Update $L = L_S(f)$ as in \eqref{eq:L_S}.
\EndFor
\State Let $M_1 = L_{\I,\I}$ and $M_2 = L_{\B,\B}$.
\State Compute the Cholesky decomposition $U_i^\top U_i = P_i^\top M_i P_i$, as in \eqref{eq:Chol}, $i=1,2$.
\State Let $\g = \nabla \widetilde{E}_A(\f)$ as in \eqref{eq:GradEA}.
\State Solve $M\h = \g$ by \eqref{eq:LS}.
\State Let $\p = -\h$.
\State Define an initial guess $\alpha$.
\While {not converge}
    \State Update $\alpha$ as in \eqref{eq:alpha}.
    \State Update $\f \leftarrow \f + \alpha \p$.
    \State Update $L \gets L_S(f)$.
    \State Let $\lambda = \h^\top \g$.
    \State Update $\g = \nabla \widetilde{E}_A(\f)$ as in \eqref{eq:GradEA}.
    \State Solve $M\h = \g$ by \eqref{eq:LS}.
    \State Update $\beta = (\h^{\top} \g) / \lambda$.
    \State Update $\p \gets -\h + \beta \p$.
\EndWhile
\end{algorithmic}
\end{algorithm}

\section{Convergence analysis}
\label{sec:4} 

In this section, we prove the global convergence of the preconditioned CG method of the AEM Algorithm \ref{alg:PCG}.

First, we introduce the strong Wolfe conditions
\begin{subequations} \label{eq:Wolfe}
\begin{align}
\widetilde{E}_A(\widehat{\f}^{(k)} + \alpha_k \p^{(k)}) & \leq \widetilde{E}_A(\widehat{\f}^{(k)}) + c_1 \alpha_k \nabla \widetilde{E}_A(\widehat{\f}^{(k)})^{\top} \p_k, \label{eq:Wolfe1} \\
|\nabla \widetilde{E}_A(\widehat{\f}^{(k)} + \alpha_k \p^{(k)})^{\top} \p^{(k)}| & \leq c_2 | \nabla \widetilde{E}_A(\widehat{\f}^{(k)})^{\top} \p^{(k)}|, \label{eq:Wolfe2}
\end{align}
\end{subequations}
with $0 < c_1 < c_2 < 1$, which play a vital role in proving the convergence of Algorithm \ref{alg:PCG}. 
Since $\widetilde{E}_A(f) \geq 0$ is bounded below, the existence of such $\alpha_k$ is guaranteed by the following lemma.
\begin{lemma}[{\cite[Lemma 3.1]{NoWr06}}]
\label{lma:Wolfe}
Let $\p^{(k)}$ be a descent direction at $\widehat{\f}^{(k)}$. Suppose $0 < c_1 < c_2 < 1$. Then, there exist intervals of step lengths satisfying the strong Wolfe conditions \eqref{eq:Wolfe}.
\end{lemma}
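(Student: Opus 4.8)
The plan is to reduce everything to a one-dimensional argument by restricting $\widetilde{E}_A$ to the ray starting at $\widehat{\f}^{(k)}$ in the direction $\p^{(k)}$. I would reuse the scalar function $\Phi^{(k)}(\alpha) = \widetilde{E}_A(\widehat{\f}^{(k)} + \alpha \p^{(k)})$ already introduced in subsection \ref{subsec:3.1}. Two structural facts are the only inputs the argument needs. First, because $\p^{(k)}$ is a descent direction, $\frac{\d}{\d\alpha}\Phi^{(k)}(0) = {\p^{(k)}}^\top \nabla \widetilde{E}_A(\widehat{\f}^{(k)}) < 0$. Second, $\Phi^{(k)}$ is bounded below, since $\widetilde{E}_A \geq 0$ by the theorem above. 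I would record both at the outset.

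Next I would exhibit the sufficient-decrease line $\ell(\alpha) = \Phi^{(k)}(0) + c_1 \alpha \frac{\d}{\d\alpha}\Phi^{(k)}(0)$. Its slope $c_1 \frac{\d}{\d\alpha}\Phi^{(k)}(0)$ is strictly negative (as $c_1 > 0$), so $\ell(\alpha) \to -\infty$ as $\alpha \to \infty$, whereas $\Phi^{(k)}$ stays bounded below. Hence the two graphs must cross; let $\alpha' > 0$ be the smallest positive value with $\Phi^{(k)}(\alpha') = \ell(\alpha')$. By minimality of $\alpha'$, we have $\Phi^{(k)}(\alpha) < \ell(\alpha)$ for every $\alpha \in (0, \alpha')$, which is exactly the sufficient-decrease bound \eqref{eq:Wolfe1} throughout that interval.

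To land the curvature condition I would apply the mean value theorem to $\Phi^{(k)}$ on $[0, \alpha']$, obtaining $\alpha'' \in (0, \alpha')$ with $\Phi^{(k)}(\alpha') - \Phi^{(k)}(0) = \alpha' \frac{\d}{\d\alpha}\Phi^{(k)}(\alpha'')$. Substituting the crossing identity $\Phi^{(k)}(\alpha') = \ell(\alpha') = \Phi^{(k)}(0) + c_1 \alpha' \frac{\d}{\d\alpha}\Phi^{(k)}(0)$ yields $\frac{\d}{\d\alpha}\Phi^{(k)}(\alpha'') = c_1 \frac{\d}{\d\alpha}\Phi^{(k)}(0)$. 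Since $\frac{\d}{\d\alpha}\Phi^{(k)}(0) < 0$ and $c_1 < c_2$, this derivative is negative with magnitude $c_1 |\frac{\d}{\d\alpha}\Phi^{(k)}(0)| < c_2 |\frac{\d}{\d\alpha}\Phi^{(k)}(0)|$, giving the strong curvature bound \eqref{eq:Wolfe2} at $\alpha''$. As $\alpha'' < \alpha'$, the sufficient-decrease inequality holds strictly at $\alpha''$ as well, so by continuity of $\Phi^{(k)}$ and its derivative both conditions persist on an open interval around $\alpha''$, producing the claimed intervals satisfying \eqref{eq:Wolfe}.

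The main obstacle is not the one-dimensional calculus, which is routine, but the hidden smoothness requirement: the mean value theorem and the final continuity step both need $\Phi^{(k)}$ to be continuously differentiable along the ray. For the normalized functional \eqref{eq:E_A_1} this amounts to keeping the denominator ${\f_\B^1}^\top D \f_\B^2$ (twice the image area) bounded away from zero along the segment $\widehat{\f}^{(k)} + \alpha \p^{(k)}$, so that no singularity of $\widetilde{E}_A$ is encountered. Verifying this positivity of the image area along the search ray is the step I would expect to require the most care when adapting the generic statement to the present energy; the cited Nocedal--Wright argument otherwise transfers verbatim.
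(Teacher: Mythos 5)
Your argument is correct and is essentially the standard proof of Lemma 3.1 in Nocedal--Wright, which the paper cites without reproducing: bounded-belowness of $\widetilde{E}_A$ forces the sufficient-decrease line to cross the graph of $\Phi^{(k)}$, the mean value theorem locates a point where $\frac{\d}{\d\alpha}\Phi^{(k)}(\alpha'') = c_1 \frac{\d}{\d\alpha}\Phi^{(k)}(0)$, and $c_1 < c_2$ plus continuity yields an open interval satisfying both conditions \eqref{eq:Wolfe}. Your closing remark about needing $\Phi^{(k)}$ continuously differentiable along the ray --- i.e.\ the image area ${\f_\B^1}^\top D\,\f_\B^2$ staying bounded away from zero --- is a legitimate caveat that the paper glosses over when invoking the generic lemma.
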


Observing that Lemma \ref{lma:Wolfe} requires the assumption that $\p^{(k)}$ is a descent direction at $\widehat{\f}^{(k)}$. When a preconditioner $M$ is considered, a vital inequality is required to ensure that $\p^{(k)}$ is always a descent direction at $\widehat{\f}^{(k)}$. The inequality is stated in the following lemma, a variant of \cite[Lemma 5.6]{NoWr06}.

\begin{lemma}
\label{lma:descent}
Given a preconditioner $M$ with a symmetric positive definite inverse. Suppose the step size $\alpha_k$ satisfies the strong Wolfe conditions \eqref{eq:Wolfe}. Then, the vector $\p^{(k)}$ satisfies 
\begin{equation} \label{eq:descent_cond}
-\frac{1}{1-c_2} \leq \frac{\nabla \widetilde{E}_A(\widehat{\f}^{(k)})^{\top} \p^{(k)}}{\| \nabla \widetilde{E}_A(\widehat{\f}^{(k)}) \|^2_{M^{-1}}} \leq \frac{2c_2 - 1}{1 - c_2},
\end{equation}
for all $k \geq 0$, where $\|\cdot\|_{M^{-1}}$ denotes the $M^{-1}$-norm defined as 
$\|\v\|_{M^{-1}} = \sqrt{\v^\top M^{-1}\v}$.
\end{lemma}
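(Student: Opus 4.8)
The plan is to argue by induction on $k$, following the structure of \cite[Lemma 5.6]{NoWr06} but carrying the preconditioner through by systematically replacing the Euclidean inner product with the $M^{-1}$-weighted one. Throughout, abbreviate $\g^{(k)} = \nabla\widetilde{E}_A(\widehat{\f}^{(k)})$ and recall from \eqref{eq:PCG} that $\p^{(0)} = -M^{-1}\g^{(0)}$ and $\p^{(k+1)} = -M^{-1}\g^{(k+1)} + \beta_{k+1}\p^{(k)}$ with $\beta_{k+1} = \|\g^{(k+1)}\|_{M^{-1}}^2/\|\g^{(k)}\|_{M^{-1}}^2$. For the base case $k=0$, since $M^{-1}$ is symmetric we have ${\g^{(0)}}^{\top}\p^{(0)} = -{\g^{(0)}}^{\top}M^{-1}\g^{(0)} = -\|\g^{(0)}\|_{M^{-1}}^2$, so the quotient in \eqref{eq:descent_cond} equals $-1$, and $-1$ lies in $[-\tfrac{1}{1-c_2}, \tfrac{2c_2-1}{1-c_2}]$ since $\tfrac{1}{1-c_2}\geq 1$ and $\tfrac{2c_2-1}{1-c_2}\geq -1$ for every $c_2\in(0,1)$.

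For the inductive step, the central computation is to take the inner product of the recurrence for $\p^{(k+1)}$ with $\g^{(k+1)}$, which gives ${\g^{(k+1)}}^{\top}\p^{(k+1)} = -\|\g^{(k+1)}\|_{M^{-1}}^2 + \beta_{k+1}\,{\g^{(k+1)}}^{\top}\p^{(k)}$. Dividing by $\|\g^{(k+1)}\|_{M^{-1}}^2$ and substituting the definition of $\beta_{k+1}$, the factor $\|\g^{(k+1)}\|_{M^{-1}}^2$ cancels and the denominator reindexes from $k+1$ to $k$, leaving the clean recurrence
\[
\frac{{\g^{(k+1)}}^{\top}\p^{(k+1)}}{\|\g^{(k+1)}\|_{M^{-1}}^2}
= -1 + \frac{{\g^{(k+1)}}^{\top}\p^{(k)}}{\|\g^{(k)}\|_{M^{-1}}^2}.
\]
It is precisely here that the $M^{-1}$-weighting built into $\beta_{k+1}$ in \eqref{eq:PCG_k} is essential, as it is exactly what makes this telescoping go through.

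Next I would bound the cross term on the right. Using $\widehat{\f}^{(k+1)} = \widehat{\f}^{(k)} + \alpha_k\p^{(k)}$, the strong Wolfe curvature condition \eqref{eq:Wolfe2} reads $|{\g^{(k+1)}}^{\top}\p^{(k)}| \leq c_2\,|{\g^{(k)}}^{\top}\p^{(k)}|$. Dividing by $\|\g^{(k)}\|_{M^{-1}}^2$ and invoking the induction hypothesis gives $|{\g^{(k)}}^{\top}\p^{(k)}|/\|\g^{(k)}\|_{M^{-1}}^2 \leq \tfrac{1}{1-c_2}$, because the largest absolute value over the interval $[-\tfrac{1}{1-c_2}, \tfrac{2c_2-1}{1-c_2}]$ is attained at the left endpoint (note $\tfrac{2c_2-1}{1-c_2} < \tfrac{1}{1-c_2}$ for $c_2<1$). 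Hence $|{\g^{(k+1)}}^{\top}\p^{(k)}|/\|\g^{(k)}\|_{M^{-1}}^2 \leq \tfrac{c_2}{1-c_2}$, and substituting the resulting two-sided bound into the recurrence yields the lower bound $-1 - \tfrac{c_2}{1-c_2} = -\tfrac{1}{1-c_2}$ and the upper bound $-1 + \tfrac{c_2}{1-c_2} = \tfrac{2c_2-1}{1-c_2}$, which closes the induction.

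The hard part is not any single estimate but keeping the preconditioner consistent so that the recurrence telescopes exactly; the algebraic identity above is the crux, and it hinges on the $M^{-1}$-norm appearing in both the numerator of $\beta_{k+1}$ and the denominator of the quotient. I would also flag that, although the displayed two-sided bound is valid for all $c_2\in(0,1)$, its intended use---guaranteeing via Lemma \ref{lma:Wolfe} that $\p^{(k)}$ is a genuine descent direction---requires the upper bound $\tfrac{2c_2-1}{1-c_2}$ to be negative, i.e.\ $c_2 < \tfrac12$; so in the convergence analysis $c_2$ should be taken in $(0,\tfrac12)$.
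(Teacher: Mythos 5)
Your proof is correct and follows essentially the same route as the paper: induction on $k$, the base case giving the ratio $-1$, the telescoping identity produced by the $M^{-1}$-weighted Fletcher--Reeves $\beta_{k+1}$, and the curvature condition \eqref{eq:Wolfe2} combined with the induction hypothesis to bound the cross term. If anything, your use of the absolute-value bound $|{\g^{(k)}}^{\top}\p^{(k)}|/\|\g^{(k)}\|_{M^{-1}}^2 \leq \tfrac{1}{1-c_2}$ is slightly more careful than the paper's step, which rewrites \eqref{eq:Wolfe2} in the signed form $|{\g^{(k+1)}}^{\top}\p^{(k)}|\leq -c_2\,{\g^{(k)}}^{\top}\p^{(k)}$ and thereby implicitly assumes $\p^{(k)}$ is already a descent direction.
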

\begin{proof}
We prove \eqref{eq:descent_cond} by induction on $k$. 
For $k = 0$, from \eqref{eq:PCG_0}, we have
\begin{equation*}
\frac{\nabla \widetilde{E}_A(\widehat{\f}^{(0)})^{\top} \p^{(0)}}{\| \nabla \widetilde{E}_A(\widehat{\f}^{(0)}) \|^2_{M^{-1}}}
= \frac{-\nabla \widetilde{E}_A(\widehat{\f}^{(0)})^{\top} M^{-1} \g^{(0)}}{\| \nabla \widetilde{E}_A(\widehat{\f}^{(0)}) \|^2_{M^{-1}}}
= \frac{-\| \nabla \widetilde{E}_A(\widehat{\f}^{(0)}) \|^2_{M^{-1}}}{\| \nabla\widetilde{E}_A(\widehat{\f}^{(0)}) \|^2_{M^{-1}}}
= -1.
\end{equation*}
Next, we assume \eqref{eq:descent_cond} holds for some $k=n$. 
Then, from \eqref{eq:PCG_k}, we have
{\small\begin{align}
\frac{\nabla \widetilde{E}_A(\widehat{\f}^{(n+1)})^{\top} \p^{(n+1)}}{\| \nabla \widetilde{E}_A(\widehat{\f}^{(n+1)}) \|^2_{M^{-1}}} 
&= \frac{\nabla \widetilde{E}_A(\widehat{\f}^{(n+1)})^{\top} M^{-1} \g^{(n+1)}}{\| \nabla \widetilde{E}_A(\widehat{\f}^{(n+1)}) \|^2_{M^{-1}}} + \beta_{n+1} \frac{\nabla \widetilde{E}_A(\widehat{\f}^{(n+1)})^{\top} \p^{(n)}}{\| \nabla \widetilde{E}_A(\widehat{\f}^{(n+1)}) \|^2_{M^{-1}}} \nonumber \\
&= -1 + \frac{{{\g}^{(n+1)}}^\top M^{-1} {\g}^{(n+1)}}{{\g^{(n)}}^\top M^{-1} \g^{(n)}} \frac{\nabla \widetilde{E}_A(\widehat{\f}^{(n+1)})^{\top} \p^{(n)}}{\| \nabla \widetilde{E}_A(\widehat{\f}^{(n+1)}) \|^2_{M^{-1}}} \nonumber \\
&= -1 + \frac{\nabla \widetilde{E}_A(\widehat{\f}^{(n+1)})^{\top} \p^{(n)}}{\| \nabla \widetilde{E}_A(\widehat{\f}^{(n)}) \|^2_{M^{-1}}}. \label{eq:4.13} 
\end{align}}
By \eqref{eq:Wolfe2}, we have
$
|\nabla \widetilde{E}_A(\widehat{\f}^{(n+1)})^{\top} \p^{(n)}| \leq -c_2 \nabla \widetilde{E}_A(\widehat{\f}^{(n)})^{\top} \p^{(n)},
$ 
i.e., 
\begin{align}
c_2 \nabla \widetilde{E}_A(\widehat{\f}^{(n)})^{\top} \p^{(n)} \leq \nabla \widetilde{E}_A(\widehat{\f}^{(n+1)})^{\top} \p^{(n)} \leq -c_2 \nabla \widetilde{E}_A(\widehat{\f}^{(n)})^{\top} \p^{(n)}. \label{eq:4.15} 
\end{align}
As a result, from \eqref{eq:4.13} and \eqref{eq:4.15}, we have
\begin{align*}
-1 + c_2\frac{\nabla \widetilde{E}_A(\widehat{\f}^{(n)})^\top \p^{(n)}}{\| \nabla \widetilde{E}_A(\widehat{\f}^{(n)}) \|^2_{M^{-1}}}
&\leq \frac{\nabla \widetilde{E}_A(\widehat{\f}^{(n+1)})^\top \p^{(n+1)}}{\| \nabla \widetilde{E}_A(\widehat{\f}^{(n+1)}) \|^2_{M^{-1}}} \\
&\leq -1 - c_2\frac{\nabla \widetilde{E}_A(\widehat{\f}^{(n)})^{\top} \p^{(n)}}{\| \nabla \widetilde{E}_A(\widehat{\f}^{(n)}) \|^2_{M^{-1}}}.
\end{align*}
Then, from the induction hypothesis, we have
\begin{equation*}
-\frac{1}{1-c_2} = -1 - \frac{c_2}{1-c_2} \leq \frac{\nabla \widetilde{E}_A(\widehat{\f}^{(n+1)})^{\top} \p^{(n+1)}}{\| \nabla \widetilde{E}_A(\widehat{\f}^{(n+1)}) \|^2_{M^{-1}}} \leq -1 + \frac{c_2}{1-c_2} = \frac{2c_2 - 1}{1 - c_2}, 
\end{equation*}
which shows that \eqref{eq:descent_cond} holds for $k=n+1$.
\end{proof}

In particular, when $c_2$ in \eqref{eq:Wolfe2} satisfies $0<c_2<\frac{1}{2}$, Lemma \ref{lma:descent} guarantees the vector $\p^{(k)}$ being a descent direction at $\f^{(k)}$, which is stated in the following corollary.

\begin{corollary} \label{cor:p}
Suppose the step size $\alpha_k$ satisfies the strong Wolfe conditions \eqref{eq:Wolfe} with $0<c_2<\frac{1}{2}$. Then, $\p^{(k)}$ is a descent direction at $\widehat{\f}^{(k)}$.
\end{corollary}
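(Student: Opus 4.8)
The plan is to read off the conclusion directly from Lemma \ref{lma:descent}, whose two-sided estimate already pins down the sign of the directional derivative $\nabla \widetilde{E}_A(\widehat{\f}^{(k)})^{\top} \p^{(k)}$. Recall that $\p^{(k)}$ being a descent direction at $\widehat{\f}^{(k)}$ means precisely $\nabla \widetilde{E}_A(\widehat{\f}^{(k)})^{\top} \p^{(k)} < 0$, so the task reduces to checking that the ratio appearing in \eqref{eq:descent_cond} is strictly negative under the hypothesis $0<c_2<\tfrac{1}{2}$.

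First I would invoke Lemma \ref{lma:descent}, which applies since the preconditioner $M$ has symmetric positive definite inverse and $\alpha_k$ satisfies the strong Wolfe conditions, to obtain the upper bound
\begin{equation*}
\frac{\nabla \widetilde{E}_A(\widehat{\f}^{(k)})^{\top} \p^{(k)}}{\| \nabla \widetilde{E}_A(\widehat{\f}^{(k)}) \|^2_{M^{-1}}} \leq \frac{2c_2 - 1}{1 - c_2}.
\end{equation*}
Next I would note that the constraint $0<c_2<\tfrac{1}{2}$ forces $2c_2-1<0$ while $1-c_2>0$, so the right-hand side $\frac{2c_2-1}{1-c_2}$ is strictly negative; hence the ratio on the left is strictly negative as well. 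Finally, since $M^{-1}$ is symmetric positive definite, the denominator $\| \nabla \widetilde{E}_A(\widehat{\f}^{(k)}) \|^2_{M^{-1}} = \nabla \widetilde{E}_A(\widehat{\f}^{(k)})^{\top} M^{-1} \nabla \widetilde{E}_A(\widehat{\f}^{(k)})$ is strictly positive whenever $\nabla \widetilde{E}_A(\widehat{\f}^{(k)}) \neq \0$ (the case $\nabla \widetilde{E}_A(\widehat{\f}^{(k)}) = \0$ being a stationary point where the iteration has already converged and the descent requirement is vacuous). Multiplying the negative ratio by this positive quantity yields $\nabla \widetilde{E}_A(\widehat{\f}^{(k)})^{\top} \p^{(k)} < 0$, which is the desired descent property.

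There is essentially no hard step here: the corollary is a direct specialization of Lemma \ref{lma:descent}, and all the analytical work has already been carried out in that lemma's inductive proof. The only point deserving a word of care is the implicit nondegeneracy of the gradient, so that the strict inequality $\| \nabla \widetilde{E}_A(\widehat{\f}^{(k)}) \|^2_{M^{-1}}>0$ holds and the sign of the numerator is genuinely determined; this is harmless since a vanishing gradient signals that a minimizer of the authalic energy, and thus an area-preserving map, has already been reached.
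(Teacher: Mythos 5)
Your proposal is correct and follows essentially the same route as the paper: both apply the upper bound of Lemma \ref{lma:descent} and observe that $\frac{2c_2-1}{1-c_2}<0$ when $0<c_2<\frac{1}{2}$ (the paper phrases this via monotonicity of $\varphi(x)=\frac{2x-1}{1-x}$, you by direct sign inspection), then conclude the directional derivative is negative. Your explicit remark about the degenerate case $\nabla\widetilde{E}_A(\widehat{\f}^{(k)})=\mathbf{0}$ is a small point of care the paper leaves implicit.
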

\begin{proof}
Noting that the function $\varphi(x) = \frac{2x-1}{1-x}$ is monotonically increasing on $[0, \frac{1}{2}]$ with $\varphi(0) = -1$ and $\varphi(\frac{1}{2}) = 0$. 
From the assumption $0<c_2<\frac{1}{2}$ and Lemma \ref{lma:descent}, we have
$$
\frac{\nabla \widetilde{E}_A(\widehat{\f}^{(k)})^{\top} \p^{(k)}}{\| \nabla \widetilde{E}_A(\widehat{\f}^{(k)}) \|^2_{M^{-1}}} \leq \frac{2c_2 - 1}{1 - c_2} < 0.
$$
Hence $\nabla \widetilde{E}_A(\widehat{\f}^{(k)})^{\top} \p^{(k)}<0$, and therefore, $\p^{(k)}$ is a descent direction at $\widehat{\f}^{(k)}$.
\end{proof}

Noting that the normalized authalic energy $\widetilde{E}_A(\widehat{\f})\geq 0$ is bounded below, and that $\widetilde{E}_A$ is continuously differentiable. 
In addition, $\widetilde{E}_A$ is Lipschitz continuous. To extend the Lipschitz continuity in $M$-norm, we provide the following lemma.
\begin{lemma} \label{lma:Lip}
Let $M$ be a symmetric positive definite matrix and $f: \mathbb{R}^n \to \mathbb{R}^n$ be Lipschitz continuous with respect to the Euclidean norm, i.e.,
\[
\|f(\mathbf{x}) - f(\mathbf{y})\| \leq c_f \, \|\mathbf{x} - \mathbf{y}\|,
\]
for some constant $c_f \geq 0$. Then it also satisfies Lipschitz continuity in the $M$-norm, i.e.,
\[
\|f(\mathbf{x}) - f(\mathbf{y})\|_M \leq m_f \, \|\mathbf{x} - \mathbf{y}\|_M,
\]
for some constant $m_f\geq 0$.
\end{lemma}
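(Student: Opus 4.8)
The plan is to reduce the claim to the standard equivalence of the Euclidean norm and the $M$-norm induced by a symmetric positive definite matrix. Since $M$ is symmetric positive definite, the spectral theorem furnishes an orthonormal eigenbasis with strictly positive eigenvalues $0 < \lambda_{\min} \leq \cdots \leq \lambda_{\max}$. The first step is to record the two-sided bound
\[
\sqrt{\lambda_{\min}}\,\|\v\| \leq \|\v\|_M \leq \sqrt{\lambda_{\max}}\,\|\v\|
\]
for every $\v \in \R^n$, which follows immediately from the Rayleigh-quotient inequalities $\lambda_{\min}\|\v\|^2 \leq \v^\top M \v \leq \lambda_{\max}\|\v\|^2$ together with the definition $\|\v\|_M = \sqrt{\v^\top M \v}$.

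With this equivalence in hand, I would chain three inequalities. Applying the upper bound to $\v = f(\x) - f(\y)$ gives $\|f(\x) - f(\y)\|_M \leq \sqrt{\lambda_{\max}}\,\|f(\x) - f(\y)\|$; the Euclidean Lipschitz hypothesis then bounds the right-hand side by $\sqrt{\lambda_{\max}}\,c_f\,\|\x - \y\|$; and finally the lower bound applied to $\v = \x - \y$, rearranged as $\|\x - \y\| \leq \tfrac{1}{\sqrt{\lambda_{\min}}}\|\x - \y\|_M$, converts this back into the $M$-norm. Combining the three estimates yields the claim with the explicit constant $m_f = c_f\sqrt{\lambda_{\max}/\lambda_{\min}} = c_f\sqrt{\kappa(M)}$, where $\kappa(M)$ denotes the spectral condition number of $M$.

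There is no genuine obstacle in this argument: it rests entirely on the elementary norm-equivalence bounds, and the only point requiring care is confirming the strict positivity of $\lambda_{\min}$, which is precisely the positive-definiteness hypothesis on $M$ and is what guarantees that $m_f$ is finite. I note that the same reasoning applies verbatim with $M$ replaced by $M^{-1}$, so this lemma simultaneously supplies the Lipschitz continuity of $\nabla \widetilde{E}_A$ in the $M^{-1}$-norm needed in the convergence analysis, once its Euclidean Lipschitz continuity has been established.
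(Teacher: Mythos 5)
Your proof is correct and takes essentially the same route as the paper's: both arguments reduce the claim to the equivalence of the Euclidean norm and the $M$-norm, the paper phrasing this via the factorization $M = M^{1/2}M^{1/2}$ and the operator norms $\|M^{1/2}\|$ and $\|M^{-1/2}\|_M$, while you use the Rayleigh-quotient bounds $\lambda_{\min}\|\v\|^2 \leq \v^\top M\v \leq \lambda_{\max}\|\v\|^2$ directly. The resulting constants coincide, since $\|M^{1/2}\| = \sqrt{\lambda_{\max}}$ and $\|M^{-1/2}\|_M = 1/\sqrt{\lambda_{\min}}$, giving $m_f = c_f\sqrt{\lambda_{\max}/\lambda_{\min}}$ in both cases.
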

\begin{proof}
Since $M$ is symmetric positive definite, $M$ is invertible and there exist symmetric positive definite matrices $M^{1/2}$ and $M^{-1/2}$ that satisfies $M = M^{1/2} M^{1/2}$ and $M^{-1} = M^{-1/2} M^{-1/2}$. It follows that
\[
\|\mathbf{x}\|_M^2 = \mathbf{x}^\top M \mathbf{x} = (M^{1/2} \mathbf{x})^\top M^{1/2} \mathbf{x} = \|M^{1/2} \mathbf{x}\|^2,
\]
and
\begin{equation} \label{eq:E2M}
\|\mathbf{x}\|^2 = \mathbf{x}^\top \mathbf{x} = (M^{-1/2} \mathbf{x})^\top M M^{-1/2} \mathbf{x} = \|M^{-1/2} \mathbf{x}\|_M^2.
\end{equation}
Therefore,
\begin{align*}
\|f(\mathbf{x}) - f(\mathbf{y})\|_M &= \| M^{1/2} (f(\mathbf{x}) - f(\mathbf{y})) \| 
\leq \| M^{1/2} \| \, \| f(\mathbf{x}) - f(\mathbf{y}) \| \\
& \leq c_f\, \| M^{1/2} \| \, \| \mathbf{x} - \mathbf{y} \| 
\leq c_f\, \| M^{1/2} \| \, \|M^{-1/2}\|_M \|\mathbf{x} - \mathbf{y}\|_M.
\end{align*}
The constant $m_f = c_f \,\| M^{1/2} \| \, \|M^{-1/2}\|_M \geq 0$.
\end{proof}

Under the assumption that $\alpha_k$ satisfies the Wolfe conditions \eqref{eq:Wolfe} with $0<c_2<\frac{1}{2}$ the modified Zoutendijk's condition would also hold, which is stated as follows.
\begin{lemma}[modified Zoutendijk's condition] \label{lma:Zou}
Suppose $\nabla \widetilde{E}_A$ is Lipschitz continuous, and the step size $\alpha_k$ satisfies the strong Wolfe conditions \eqref{eq:Wolfe} with $0<c_2<\frac{1}{2}$. 
Given a matrix $M$ with $M^{-1}$ being a symmetric positive definite matrix.
Then, the vector $\p^{(k)}$ satisfies
\begin{subequations} \label{eq:Zou}
\begin{equation}
\sum_{k=0}^{\infty} \cos^2 \theta_{M}^{(k)} \| \nabla \widetilde{E}_A(\widehat{\f}^{(k)}) \|_{M^{-1}}^2 < \infty,
\end{equation}
where $\theta^{(k)}_M$ is defined as
\begin{equation} \label{eq:cos}
\cos\theta_{M}^{(k)} = -\frac{\nabla \widetilde{E}_A(\widehat{\f}^{(k)})^\top \p^{(k)}}{\| \nabla \widetilde{E}_A(\widehat{\f}^{(k)}) \|_{M^{-1}} \| \p^{(k)} \|_{M^{-1}}}.
\end{equation}
\end{subequations}
\end{lemma}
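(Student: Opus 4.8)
The plan is to reproduce the classical proof of Zoutendijk's condition (as in the proof of Theorem 3.2 of \cite{NoWr06}), but to carry every estimate out in the $M^{-1}$-norm so that the per-step decrease of $\widetilde{E}_A$ is matched exactly to the summand $\cos^2\theta_M^{(k)}\,\|\nabla\widetilde{E}_A(\widehat{\f}^{(k)})\|_{M^{-1}}^2$. Writing $\g^{(k)}=\nabla\widetilde{E}_A(\widehat{\f}^{(k)})$, I first record that the hypothesis $0<c_2<\tfrac12$ makes each $\p^{(k)}$ a genuine descent direction by Corollary \ref{cor:p}, so that ${\g^{(k)}}^{\top}\p^{(k)}<0$ and a step $\alpha_k$ obeying \eqref{eq:Wolfe} exists by Lemma \ref{lma:Wolfe}.

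The key step is to extract a lower bound on $\alpha_k$ from the curvature condition \eqref{eq:Wolfe2}. Subtracting ${\g^{(k)}}^{\top}\p^{(k)}$ from ${\g^{(k+1)}}^{\top}\p^{(k)}\geq c_2\,{\g^{(k)}}^{\top}\p^{(k)}$ gives $({\g^{(k+1)}}-{\g^{(k)}})^{\top}\p^{(k)}\geq (c_2-1)\,{\g^{(k)}}^{\top}\p^{(k)}>0$. To bound the left-hand side in the correct norms I use the factorization $\u^{\top}\v=(M^{1/2}\u)^{\top}(M^{-1/2}\v)$, so that Cauchy--Schwarz yields $({\g^{(k+1)}}-{\g^{(k)}})^{\top}\p^{(k)}\leq \|{\g^{(k+1)}}-{\g^{(k)}}\|_{M}\,\|\p^{(k)}\|_{M^{-1}}$. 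Then Lemma \ref{lma:Lip}, applied to the Lipschitz map $\nabla\widetilde{E}_A$, bounds $\|{\g^{(k+1)}}-{\g^{(k)}}\|_{M}\leq m\,\|\widehat{\f}^{(k+1)}-\widehat{\f}^{(k)}\|_{M}=m\,\alpha_k\,\|\p^{(k)}\|_{M}$, and the equivalence of the $M$- and $M^{-1}$-norms, $\|\p^{(k)}\|_{M}\leq \lambda_{\max}(M)\,\|\p^{(k)}\|_{M^{-1}}$, collapses the mixed product into the single power $\|\p^{(k)}\|_{M^{-1}}^2$. Absorbing the constants into one $L'>0$ gives $\alpha_k\geq \frac{1-c_2}{L'}\,\frac{-{\g^{(k)}}^{\top}\p^{(k)}}{\|\p^{(k)}\|_{M^{-1}}^2}$.

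I then feed this lower bound into the sufficient-decrease condition \eqref{eq:Wolfe1}. Since ${\g^{(k)}}^{\top}\p^{(k)}<0$, multiplying the bound on $\alpha_k$ by $c_1\,{\g^{(k)}}^{\top}\p^{(k)}$ flips the inequality and produces $\widetilde{E}_A(\widehat{\f}^{(k+1)})\leq \widetilde{E}_A(\widehat{\f}^{(k)})-c\,\frac{({\g^{(k)}}^{\top}\p^{(k)})^2}{\|\p^{(k)}\|_{M^{-1}}^2}$ with $c=c_1(1-c_2)/L'>0$. From the definition \eqref{eq:cos} one reads off $\frac{({\g^{(k)}}^{\top}\p^{(k)})^2}{\|\p^{(k)}\|_{M^{-1}}^2}=\cos^2\theta_M^{(k)}\,\|\g^{(k)}\|_{M^{-1}}^2$, so this is precisely a decrease by the desired summand. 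Summing the telescoping inequality over $k=0,\dots,K$ and using that $\widetilde{E}_A\geq 0$ is bounded below gives $\sum_{k=0}^{K}\cos^2\theta_M^{(k)}\,\|\g^{(k)}\|_{M^{-1}}^2\leq \widetilde{E}_A(\widehat{\f}^{(0)})/c$ uniformly in $K$; letting $K\to\infty$ establishes \eqref{eq:Zou}.

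The main obstacle I anticipate is the norm bookkeeping in the second step: because the numerator of $\cos\theta_M^{(k)}$ is a Euclidean inner product while both denominators are $M^{-1}$-norms, the Cauchy--Schwarz and Lipschitz estimates must be routed through the $M^{1/2}/M^{-1/2}$ splitting and the $M$-to-$M^{-1}$ norm equivalence so that exactly the factor $\|\p^{(k)}\|_{M^{-1}}^2$ survives in the denominator. Once the constant $m$ from Lemma \ref{lma:Lip}, the spectral factor $\lambda_{\max}(M)$, and the Lipschitz constant of $\nabla\widetilde{E}_A$ are consolidated into a single positive $L'$, the remainder is the standard telescoping argument and presents no further difficulty.
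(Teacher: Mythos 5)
Your proof is correct and follows essentially the same route as the paper's: rewrite the curvature condition \eqref{eq:Wolfe2} using the descent property from Corollary \ref{cor:p}, subtract ${\g^{(k)}}^\top\p^{(k)}$, bound the resulting left-hand side via the Lipschitz continuity of $\nabla\widetilde{E}_A$ to obtain a lower bound on $\alpha_k$ proportional to $-{\g^{(k)}}^\top\p^{(k)}/\|\p^{(k)}\|_{M^{-1}}^2$, substitute into \eqref{eq:Wolfe1}, identify the summand via \eqref{eq:cos}, and telescope using $\widetilde{E}_A\geq 0$. The only difference is cosmetic bookkeeping: you split the inner product directly as $\|\cdot\|_{M}\,\|\cdot\|_{M^{-1}}$ through the $M^{\pm 1/2}$ factorization and then invoke $\|\p^{(k)}\|_{M}\leq\lambda_{\max}(M)\,\|\p^{(k)}\|_{M^{-1}}$, whereas the paper applies the Euclidean Cauchy--Schwarz inequality first and converts afterwards using the induced norm $\|M^{1/2}\|_{M^{-1}}^2$; both yield the same constant structure and the same conclusion.
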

\begin{proof}

Since the step size $\alpha_k$ satisfies \eqref{eq:Wolfe}, by Corollary \ref{cor:p}, $\p^{(k)}$ is a descent direction at $\widehat{\f}^{(k)}$ so that  \eqref{eq:Wolfe2} can be written as
\begin{equation} \label{eq:lma4.4_1}
\nabla \widetilde{E}_A(\widehat{\f}^{(k+1)})^{\top} \p^{(k)} \geq c_2 \nabla \widetilde{E}_A(\widehat{\f}^{(k)})^{\top} \p^{(k)}.
\end{equation}
By subtracting $\nabla \widetilde{E}_A(\widehat{\f}^{(k)})^{\top} \p^{(k)}$ in both sides of \eqref{eq:lma4.4_1}, we obtain
\begin{equation*}
\big(\nabla \widetilde{E}_A(\widehat{\f}^{(k+1)}) - \nabla \widetilde{E}_A(\widehat{\f}^{(k)})\big)^{\top} \p^{(k)} \geq (c_2-1) \nabla \widetilde{E}_A(\widehat{\f}^{(k)})^{\top} \p^{(k)}.
\end{equation*}
By applying Lemma \ref{lma:Lip} and \eqref{eq:E2M}, we have
\begin{align*}
(c_2-1) \nabla \widetilde{E}_A(\widehat{\f}^{(k)})^{\top} \p^{(k)}
& \leq \big( \nabla \widetilde{E}_A(\widehat{\f}^{(k+1)}) - \nabla \widetilde{E}_A(\widehat{\f}^{(k)}) \big)^{\top} \p^{(k)} \\
& \leq \| \nabla \widetilde{E}_A(\widehat{\f}^{(k+1)}) - \nabla \widetilde{E}_A(\widehat{\f}^{(k)}) \| \|\p^{(k)}\| \\
&\leq c_E \, \| \widehat{\f}^{(k+1)} - \widehat{\f}^{(k)} \| \| \p^{(k)}\| \\
&= c_E \, \| M^{1/2} (\widehat{\f}^{(k+1)} - \widehat{\f}^{(k)}) \|_{M^{-1}} \| M^{1/2} \p^{(k)}\|_{M^{-1}} \\
& \leq c_E \, \|M^{1/2}\|_{M^{-1}}^2 \| \widehat{\f}^{(k+1)} - \widehat{\f}^{(k)} \|_{M^{-1}}  \| \p^{(k)}\|_{M^{-1}} \\
&= c_E \, \|M^{1/2}\|_{M^{-1}}^2 \alpha_k  \| \p^{(k)}\|_{M^{-1}}^2.
\end{align*}
As a result,
\begin{align*}
\alpha_k &\geq \frac{c_2-1}{c_E \, \|M^{1/2}\|_{M^{-1}}^2} \frac{\nabla \widetilde{E}_A(\widehat{\f}^{(k)})^{\top} \p^{(k)}}{\| \p^{(k)}\|^2_{M^{-1}}}.
\end{align*}
By substituting $\alpha_k$ into \eqref{eq:Wolfe1}, we obtain
\begin{align}
\widetilde{E}_A(\widehat{\f}^{(k+1)}) &\leq \widetilde{E}_A(\widehat{\f}^{(k)}) - \frac{c (\nabla \widetilde{E}_A(\widehat{\f}^{(k)})^{\top} \p^{(k)})^2}{\| \p^{(k)}\|^2_{M^{-1}}} \\
&= \widetilde{E}_A(\widehat{\f}^{(k)}) - c \cos^2\theta_M^{(k)} \|\widetilde{E}_A(\widehat{\f}^{(k)})\|_{M^{-1}}^2, \label{eq:4.12}
\end{align}
where $c=\frac{c_1(1-c_2)}{c_E \, \|M^{1/2}\|_{M^{-1}}^2}$ and $\cos\theta_M^{(k)}$ is defined as \eqref{eq:cos}. 
The recursive inequality \eqref{eq:4.12} implies
$$
\widetilde{E}_A(\widehat{\f}^{(k+1)}) \leq \widetilde{E}_A(\widehat{\f}^{(0)}) - c \sum_{j=0}^k \cos^2\theta_M^{(j)} \|\widetilde{E}_A(\widehat{\f}^{(j)})\|_{M^{-1}}^2.
$$
In other words,
\begin{equation} \label{eq:3.15}
\sum_{j = 0}^{k} \cos^2 \theta_{M}^{(j)} \| \nabla \widetilde{E}_A(\widehat{\f}^{(j)}) \|_{M^{-1}}^2 = \frac{1}{c}\big( \widetilde{E}_A(\widehat{\f}^{(0)}) - \widetilde{E}_A(\widehat{\f}^{(k+1)}) \big).
\end{equation}
Noting that $\widetilde{E}_A(\widehat{\f})$ is bounded below, which implies $\widetilde{E}_A(\widehat{\f}^{(0)}) - \widetilde{E}_A(\widehat{\f}^{(k+1)})$ is bounded above. Therefore, by taking the limit of \eqref{eq:3.15}, we obtain \eqref{eq:Zou} as desired.
\end{proof}

Lemma \ref{lma:Zou} implies that as $\cos^2 \theta_M^{(k)} \| \nabla \widetilde{E}_A(\widehat{\f}^{(k)}) \|_{M^{-1}}^2$ tends to zero, the gradient descent method converges because $\theta_M^{(k)} = 0$ and $\cos\theta_M^{(k)} = 1$. In the case of the nonlinear preconditioned CG method, where $\p^{(k)}$ is a descent direction with $\theta_M^{(k)} < \frac{\pi}{2}$, we have $\cos\theta_M^{(k)} > 0$ for all $k$. However, it is still possible that $\cos\theta_M^{(k)}$ approaches zero, which means that $\| \nabla \widetilde{E}_A(\widehat{\f}^{(k)}) \|_{M^{-1}}$ may not converge to zero. Fortunately, we can apply Lemmas \ref{lma:descent} and \ref{lma:Zou} to establish a global convergence theorem for the nonlinear preconditioned CG method, which is stated as follows.

\begin{theorem}
Suppose $\nabla \widetilde{E}_A$ is Lipschitz continuous, and the step size $\alpha_k$ satisfies the strong Wolfe conditions \eqref{eq:Wolfe} with $0<c_1<c_2<\frac{1}{2}$. Then,
\begin{equation} \label{eq:PCG_Convergence}
\liminf_{k \to \infty}{\| \nabla \widetilde{E}_A(\widehat{\f}^{(k)}) \|_{M^{-1}} } = 0,
\end{equation}
where $M^{-1}$ is a symmetric positive definite matrix.
\end{theorem}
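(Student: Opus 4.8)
The plan is to argue by contradiction, mimicking the classical Fletcher--Reeves global convergence analysis but carried out in the $M^{-1}$-norm using the tools assembled above. Suppose \eqref{eq:PCG_Convergence} fails; then there is a constant $\gamma>0$ with $\|\nabla\widetilde{E}_A(\widehat{\f}^{(k)})\|_{M^{-1}}\geq\gamma$ for every $k$. Writing $\g^{(k)}=\nabla\widetilde{E}_A(\widehat{\f}^{(k)})$, I would first convert the modified Zoutendijk condition (Lemma \ref{lma:Zou}) into a usable form: by the definition \eqref{eq:cos} of $\cos\theta_M^{(k)}$ it reads $\sum_{k}(\g^{(k)\top}\p^{(k)})^2/\|\p^{(k)}\|_{M^{-1}}^2<\infty$. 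Since $0<c_2<\tfrac12$, the upper estimate in Lemma \ref{lma:descent} gives $-\g^{(k)\top}\p^{(k)}\geq\frac{1-2c_2}{1-c_2}\|\g^{(k)}\|_{M^{-1}}^2>0$, so that $(\g^{(k)\top}\p^{(k)})^2\geq\big(\tfrac{1-2c_2}{1-c_2}\big)^2\|\g^{(k)}\|_{M^{-1}}^4$. Substituting this lower bound yields the key summability estimate
\begin{equation*}
\sum_{k=0}^\infty \frac{\|\g^{(k)}\|_{M^{-1}}^4}{\|\p^{(k)}\|_{M^{-1}}^2}<\infty.
\end{equation*}

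The crux of the argument, and the step I expect to be the main obstacle, is to control the growth of the search directions and show
\begin{equation*}
\frac{\|\p^{(k)}\|_{M^{-1}}^2}{\|\g^{(k)}\|_{M^{-1}}^4}\leq c_3\sum_{j=0}^k\frac{1}{\|\g^{(j)}\|_{M^{-1}}^2}
\end{equation*}
for some constant $c_3>0$. The natural route is to expand the recurrence \eqref{eq:PCG_k} in the $M^{-1}$-inner product, insert $\beta_{k}=\|\g^{(k)}\|_{M^{-1}}^2/\|\g^{(k-1)}\|_{M^{-1}}^2$, and control the cross term using the curvature condition \eqref{eq:Wolfe2} together with the lower estimate $-\g^{(k-1)\top}\p^{(k-1)}\leq\frac{1}{1-c_2}\|\g^{(k-1)}\|_{M^{-1}}^2$ from Lemma \ref{lma:descent}. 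The preconditioner is precisely what makes this delicate: because the direction carries a factor $M^{-1}$ while the norm is $\|\cdot\|_{M^{-1}}$, a direct expansion produces $M^{-2}$ and $M^{-3}$ terms that do not telescope cleanly. I would resolve this by passing to the transformed variables $\tilde{\g}^{(k)}=M^{-1/2}\g^{(k)}$ and $\tilde{\p}^{(k)}=M^{1/2}\p^{(k)}$, under which \eqref{eq:PCG_k} becomes the textbook Fletcher--Reeves recurrence $\tilde{\p}^{(k)}=-\tilde{\g}^{(k)}+\beta_k\tilde{\p}^{(k-1)}$ with $\beta_k=\|\tilde{\g}^{(k)}\|^2/\|\tilde{\g}^{(k-1)}\|^2$ and $\|\tilde{\g}^{(k)}\|=\|\g^{(k)}\|_{M^{-1}}$. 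The standard direction-growth estimate then applies to $\|\tilde{\p}^{(k)}\|$; since $\|\tilde{\p}^{(k)}\|^2=\|\p^{(k)}\|_M^2$ is equivalent to $\|\p^{(k)}\|_{M^{-1}}^2$ up to the spectral constants $\lambda_{\min}(M)^2$ and $\lambda_{\max}(M)^2$ of $M$, the bound transfers to the $M^{-1}$-norm after adjusting $c_3$.

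With the direction bound in hand, the contradiction follows from the divergence of the harmonic series. Under the standing hypothesis $\|\g^{(j)}\|_{M^{-1}}\geq\gamma$ we have $\sum_{j=0}^k \|\g^{(j)}\|_{M^{-1}}^{-2}\leq(k+1)/\gamma^2$, so the direction bound gives $\|\g^{(k)}\|_{M^{-1}}^4/\|\p^{(k)}\|_{M^{-1}}^2\geq\gamma^2/\big(c_3(k+1)\big)$. Summing over $k$ forces $\sum_k\|\g^{(k)}\|_{M^{-1}}^4/\|\p^{(k)}\|_{M^{-1}}^2=\infty$, contradicting the summability estimate obtained from Zoutendijk. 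Hence no such $\gamma$ can exist, and therefore $\liminf_{k\to\infty}\|\nabla\widetilde{E}_A(\widehat{\f}^{(k)})\|_{M^{-1}}=0$, which is \eqref{eq:PCG_Convergence}.
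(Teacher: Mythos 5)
Your argument is correct and follows essentially the same route as the paper: contradiction, the lower bound $-\g^{(k)\top}\p^{(k)}\geq\frac{1-2c_2}{1-c_2}\|\g^{(k)}\|_{M^{-1}}^2$ from Lemma \ref{lma:descent} combined with Lemma \ref{lma:Zou} to obtain $\sum_k\|\g^{(k)}\|_{M^{-1}}^4/\|\p^{(k)}\|^2<\infty$, then the Fletcher--Reeves direction-growth bound and the divergence of the harmonic series. The one place you diverge is the direction-growth estimate: the paper expands $\|\p^{(k)}\|_M^2$ directly from the recurrence \eqref{eq:PCG_k}, controlling the cross term via \eqref{eq:Wolfe2} and Lemma \ref{lma:descent}, whereas you pass to $\tilde{\g}^{(k)}=M^{-1/2}\g^{(k)}$, $\tilde{\p}^{(k)}=M^{1/2}\p^{(k)}$ and invoke the unpreconditioned estimate; both work, and your change of variables has the small advantage of making explicit the equivalence between $\|\p^{(k)}\|_M$ and $\|\p^{(k)}\|_{M^{-1}}$ (the paper's displays switch between these two norms without comment). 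If you write this up, do spell out the standard direction-growth recursion in the transformed variables rather than merely citing it, since that recursion is the actual content of the step.
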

\begin{proof}
We prove \eqref{eq:PCG_Convergence} by contradiction.
Assume on the contrary that 
$$
\liminf_{k\to\infty}{\| \nabla \widetilde{E}_A(\widehat{\f}^{(k)}) \|_{M^{-1}}} \neq 0,
$$
i.e., there exist $N\in\mathbb{N}$ and $\gamma > 0$ such that 
\begin{equation}\label{eq:1}
\| \nabla \widetilde{E}_A(\widehat{\f}^{(k)}) \|_{M^{-1}} \geq \gamma,
\end{equation}
for every $k>N$. Let $\theta_M^{(k)}$ be defined as \eqref{eq:cos}. 
Then, by Lemma \ref{lma:descent} together with \eqref{eq:cos}, we have
\begin{equation*}
-\frac{1}{1-c_2} \leq -\cos\theta_M^{(k)} \frac{\| \p^{(k)} \|_{M^{-1}}}{\| \nabla \widetilde{E}_A(\widehat{\f}^{(k)}) \|_{M^{-1}}} \leq \frac{2c_2 - 1}{1 - c_2}.
\end{equation*}
Equivalently, 
\begin{equation}
\frac{1-2c_2}{1-c_2} \frac{\| \nabla \widetilde{E}_A(\widehat{\f}^{(k)}) \|_{M^{-1}}}{\| \p^{(k)} \|_{M^{-1}}} \leq \cos\theta_k  \leq \frac{1}{1 - c_2} \frac{\| \nabla \widetilde{E}_A(\widehat{\f}^{(k)}) \|_{M^{-1}}}{\| \p^{(k)} \|_{M^{-1}}}. \label{eq:2}
\end{equation}
By Lemma \ref{lma:Zou}, we have
\begin{equation}
\sum_{k=0}^{\infty} \cos^2\theta_M^{(k)} \| \nabla \widetilde{E}_A(\widehat{\f}^{(k)}) \|^2_{M^{-1}} < \infty.  \label{eq:3}
\end{equation}
Hence, with \eqref{eq:2} and \eqref{eq:3}, we obtain
\begin{equation}
\sum_{k=0}^{\infty} \frac{\| \nabla \widetilde{E}_A(\widehat{\f}^{(k)}) \|^4_{M^{-1}}}{\| \p^{(k)} \|^2_{M^{-1}}} < \infty.  \label{eq:4}
\end{equation}
By Lemma \ref{lma:descent}, we have
\begin{equation} \label{eq:4.17a}
-\nabla \widetilde{E}_A(\widehat{\f}^{(k)})^{\top} \p^{(k)} \leq \frac{1}{1-c_2} \| \nabla \widetilde{E}_A(\widehat{\f}^{(k)}) \|^2_{M^{-1}}.
\end{equation}
Then, by \eqref{eq:Wolfe2} and \eqref{eq:4.17a}, we obtain
\begin{equation}
| \nabla \widetilde{E}_A(\widehat{\f}^{(k)})^{\top} \p^{(k-1)} | \leq -c_2 \nabla \widetilde{E}_A(\widehat{\f}^{(k-1)})^{\top} \p^{(k-1)} \leq \frac{c_2}{1-c_2} \| \nabla \widetilde{E}_A(\widehat{\f}^{(k-1)}) \|^2_{M^{-1}}.  \label{eq:5}
\end{equation}
Hence, by \eqref{eq:5}, we have
{\small\begin{align}
&\| \p^{(k)} \|^2_{M} \nonumber \\
&= \| -M^{-1} \nabla \widetilde{E}_A(\widehat{\f}^{(k)}) + \beta_k \p^{(k-1)} \|^2_{M} \nonumber\\
&=  \| M^{-1} \nabla \widetilde{E}_A(\widehat{\f}^{(k)}) \|_{M}^2 - 2 \beta_k (M^{-1} \nabla \widetilde{E}_A(\widehat{\f}^{(k)}) )^\top (M \p^{(k-1)}) + \beta^2_k \|\p^{(k-1)}\|_{M}^2 \nonumber\\
& \leq \| \nabla \widetilde{E}_A(\widehat{\f}^{(k)}) \|^2_{M^{-1}} + 2\beta_k | \nabla \widetilde{E}_A(\widehat{\f}^{(k)})^{\top} \p^{(k-1)} | + \beta^2_k \|\p^{(k-1)}\|_{M}^2 \nonumber\\
&\leq \| \nabla \widetilde{E}_A(\widehat{\f}^{(k)}) \|^2_{M^{-1}} + \frac{2c_2}{1-c_2} \beta_k \| \nabla \widetilde{E}_A(\widehat{\f}^{(k-1)}) \|^2_{M^{-1}} + \beta^2_k \|\p^{(k-1)}\|_{M}^2 \nonumber\\
&= \| \nabla \widetilde{E}_A(\widehat{\f}^{(k)}) \|^2_{M^{-1}} + \frac{2c_2}{1-c_2} \| \nabla \widetilde{E}_A(\widehat{\f}^{(k)}) \|^2_{M^{-1}}  + \beta^2_k \|\p^{(k-1)}\|_{M}^2 \nonumber\\
&= \Big (\frac{1+c_2}{1-c_2} \Big ) \| \nabla \widetilde{E}_A(\widehat{\f}^{(k)}) \|^2_{M^{-1}} + \beta^2_k \|\p^{(k-1)}\|_{M}^2. \label{eq:6}
\end{align}}
Let $c_3 = (1+c_2) / (1-c_2)$.
The recursive inequality \eqref{eq:6} with $\p^{(0)} = M^{-1} \nabla \widetilde{E}_A(\widehat{\f}^{(0)})$ implies
\begin{equation} \label{eq:p_k}
\| \p^{(k)} \|^2_{M} \leq c_3 \sum_{i=0}^k \Big(\prod_{j=i+1}^k \beta_j^2\Big) \|\nabla \widetilde{E}_A(\widehat{\f}^{(i)})\|_{M^{-1}}^2.
\end{equation}
Recall that $\beta_k = \| \nabla \widetilde{E}_A(\widehat{\f}^{(k)}) \|^2_{M^{-1}} / \| \nabla \widetilde{E}_A(\widehat{\f}^{(k-1)}) \|^2_{M^{-1}}$, the product
\begin{equation} \label{eq:7}
\prod_{j=i+1}^k \beta_j^2 = \frac{\| \nabla \widetilde{E}_A(\widehat{\f}^{(k)}) \|^4_{M^{-1}}}{\| \nabla \widetilde{E}_A(\widehat{\f}^{(i)}) \|^4_{M^{-1}}}.  
\end{equation}
By substituting \eqref{eq:7} into \eqref{eq:p_k}, we obtain
\begin{equation} \label{eq:4.31}
\| \p^{(k)} \|^2_{M} \leq c_3 \| \nabla \widetilde{E}_A(\widehat{\f}^{(k)}) \|^4_{M^{-1}} \sum_{i=0}^k \frac{1}{\| \nabla \widetilde{E}_A(\widehat{\f}^{(i)}) \|^2_{M^{-1}}}.
\end{equation}
In addition, by \eqref{eq:1} and the inequality $\| \nabla \widetilde{E}_A(\widehat{\f}^{(k)}) \|^2_{M^{-1}} \leq \| \nabla \widetilde{E}_A(\widehat{\f}^{(j)}) \|^2_{M^{-1}}$ for $j \leq k$, we have, for $k>N$,
\begin{equation}
\sum_{i=0}^k \frac{1}{\| \nabla \widetilde{E}_A(\widehat{\f}^{(i)}) \|^2_{M^{-1}}} \leq \sum_{i=0}^k \frac{1}{\| \nabla \widetilde{E}_A(\widehat{\f}^{(k)}) \|^2_{M^{-1}}}  \leq \frac{k}{\gamma^2}.
\label{eq:4.32}
\end{equation}
Since $\nabla \widetilde{E}_A$ is Lipschitz continuous, there exists $\overline{\gamma}$ such that $\| \nabla \widetilde{E}_A(\widehat{\f}^{(k)}) \|_{M^{-1}} \leq \overline{\gamma}$ for all $k$.
As a result, from \eqref{eq:4.31} and \eqref{eq:4.32}, for $k>N$,
\begin{equation*}
\| \p^{(k)} \|^2_{M} \leq \frac{c_3 \overline{\gamma}^4}{\gamma^2} k.
\end{equation*}
This implies that
\begin{equation} \label{eq:8}
\sum_{k=1}^{\infty} \frac{1}{\| \p^{(k)} \|^2_{M}} \geq \omega \sum_{k=1}^{\infty} \frac{1}{k}, 
\end{equation}
for some $\omega>0$.
On the other hand, from \eqref{eq:1} and \eqref{eq:4}, we have
\begin{align*}
\sum_{k=0}^{\infty} \frac{1}{\| \p^{(k)} \|^2_{M}} < \infty, 
\end{align*}
which contradicts to \eqref{eq:8}.
\end{proof}

\section{Numerical experiments}
\label{sec:5}

We present the numerical results of the disk area-preserving parameterizations computed by the preconditioned CG method for AEM, Algorithm \ref{alg:PCG}. 
The experiments were performed using MATLAB on a MacBook Pro M1 Max with 32 GB RAM. The benchmark triangular mesh models, shown in Figure \ref{fig:MeshModel}, were acquired from reputable sources such as the AIM@SHAPE shape repository \cite{AIM}, the Stanford 3D scanning repository \cite{Stanford}, and Sketchfab \cite{Sketchfab}. Some mesh models were modified to guarantee that each triangular face contains at least one interior vertex.

\begin{figure}
\centering
\resizebox{\textwidth}{!}{
\begin{tabular}{cccccccc}
\cline{1-2}\cline{4-5}\cline{7-8}
\multicolumn{2}{c}{Foot} && \multicolumn{2}{c}{Chinese Lion} && \multicolumn{2}{c}{Femur} \\
$\F(\M) = 19,966$ & $\V(\M) = 10,010$ && $\F(\M) = 34,421$ & $\V(\M) = 17,334$ && $\F(\M) = 43,301$ & $\V(\M) = 21,699$ \\
\includegraphics[height=3cm]{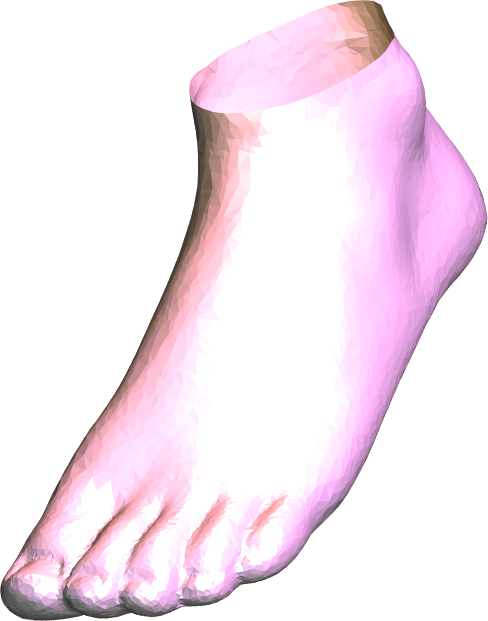} &
\includegraphics[height=3cm]{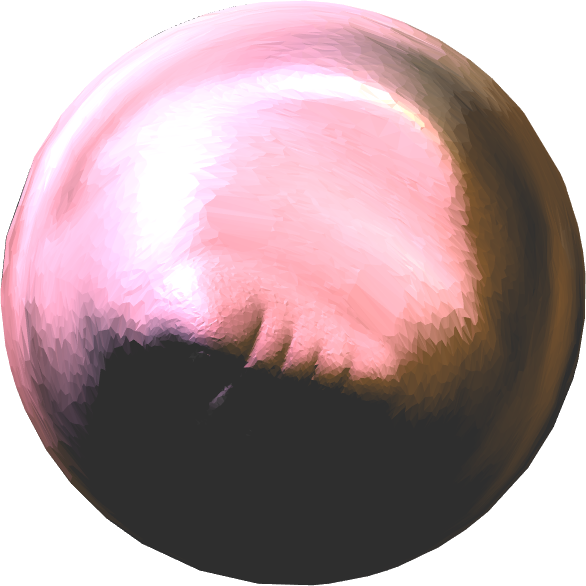} &&
\includegraphics[height=3cm]{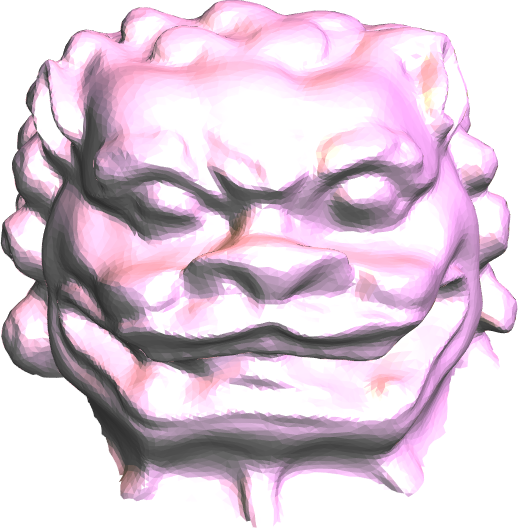} &
\includegraphics[height=3cm]{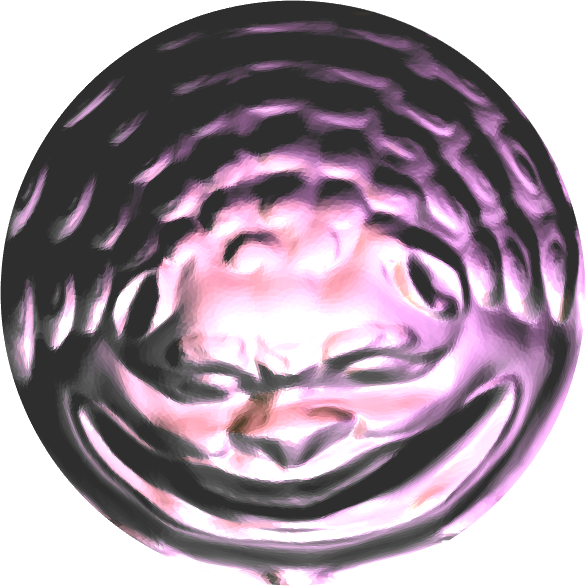} &&
\includegraphics[height=3cm]{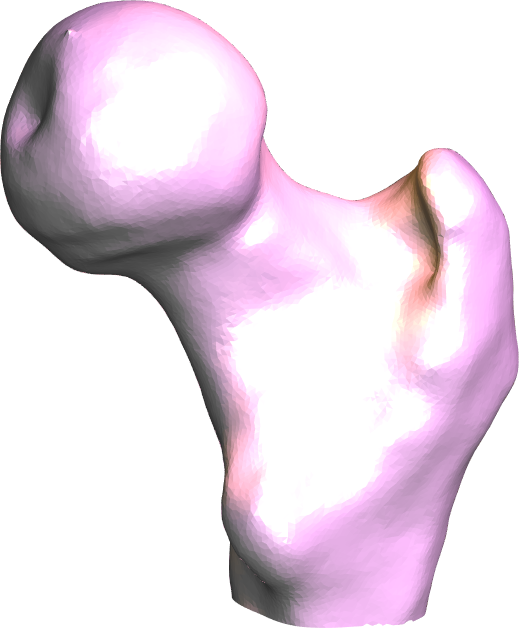} & 
\includegraphics[height=3cm]{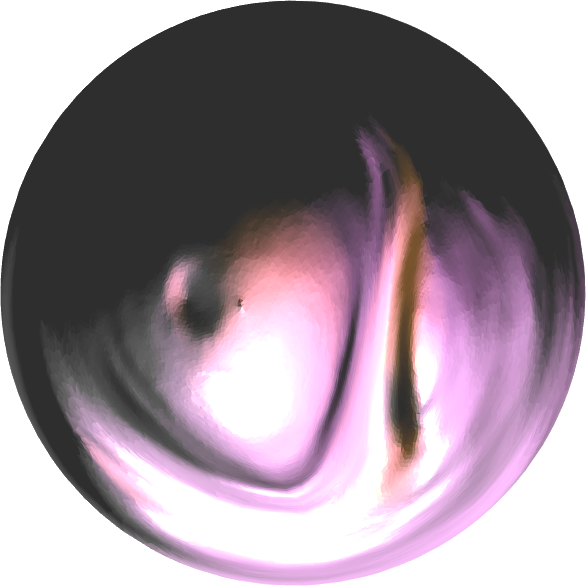} 
\\ \cline{1-2}\cline{4-5}\cline{7-8}
\\[-0.3cm]\cline{1-2}\cline{4-5}\cline{7-8}
\multicolumn{2}{c}{Stanford Bunny} && \multicolumn{2}{c}{Max Planck} && \multicolumn{2}{c}{Human Brain} \\
$\F(\M) = 62,946$ & $\V(\M) = 31,593$ && $\F(\M) = 82,977$ & $\V(\M) = 41,588$ && $\F(\M) = 96,811$ & $\V(\M) = 48,463$ \\
\includegraphics[height=3cm]{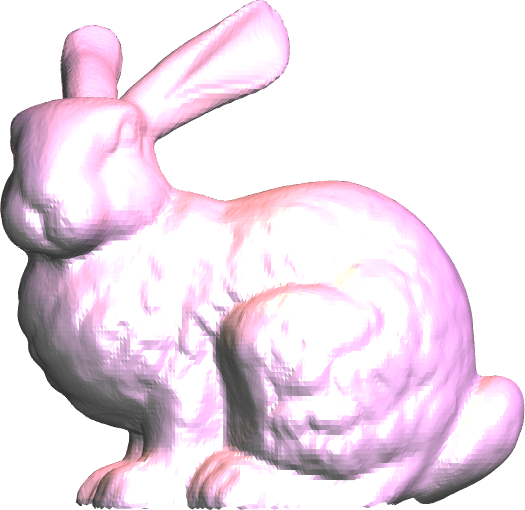} &
\includegraphics[height=3cm]{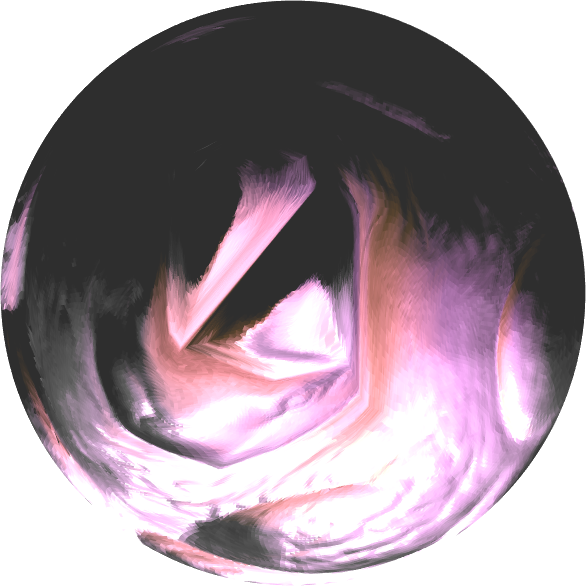} &&
\includegraphics[height=3cm]{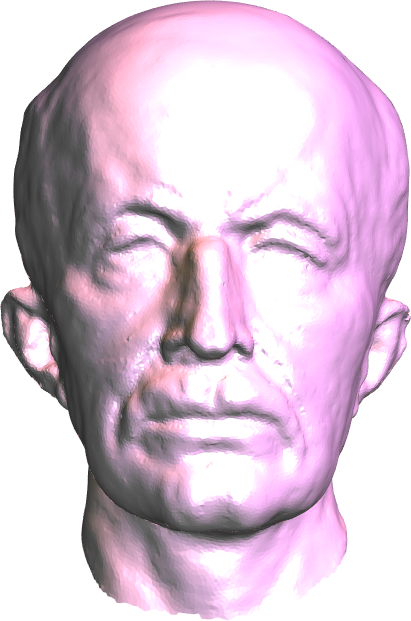} &
\includegraphics[height=3cm]{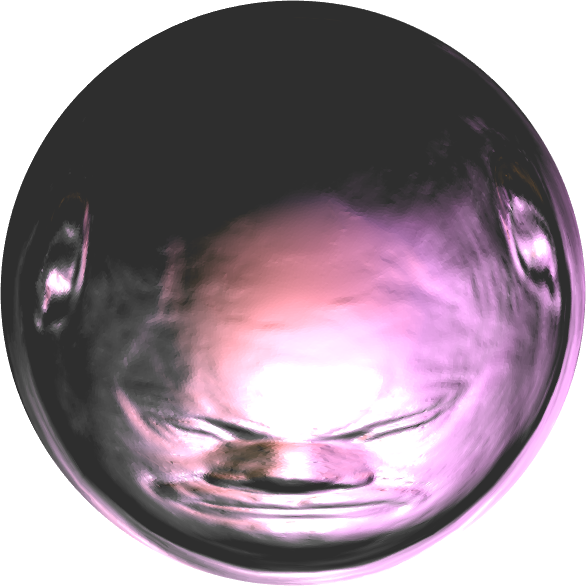} &&
\includegraphics[height=3cm]{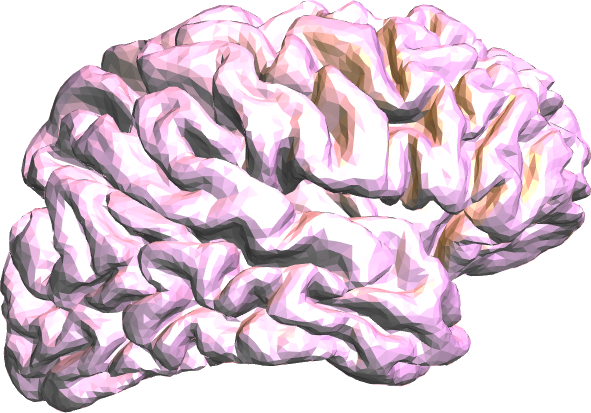} &
\includegraphics[height=3cm]{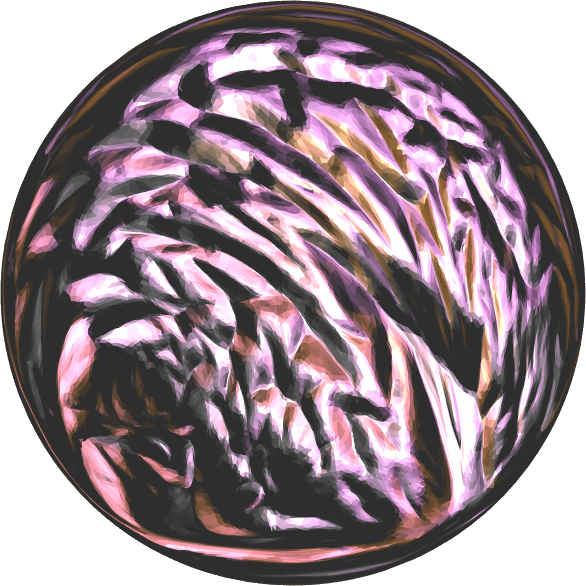} 
\\ \cline{1-2}\cline{4-5}\cline{7-8}
\\[-0.3cm]\cline{1-2}\cline{4-5}\cline{7-8}
\multicolumn{2}{c}{Left Hand} && \multicolumn{2}{c}{Knit Cap Man} && \multicolumn{2}{c}{Human Face} \\
$\F(\M) = 105,780$ & $\V(\M) = 53,011$ && $\F(\M) = 118,849$ & $\V(\M) = 59,561$ && $\F(\M) = 278,980$ & $\V(\M) = 140,085$ \\
\includegraphics[height=3cm]{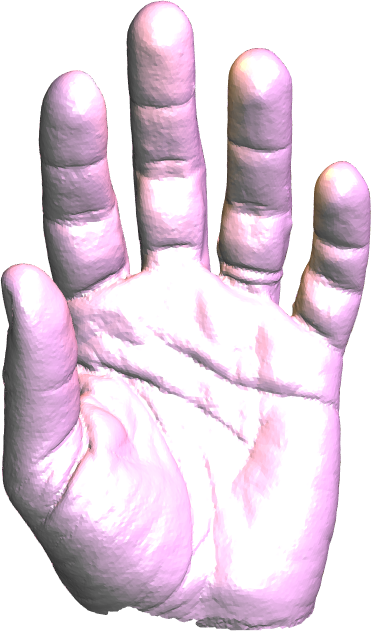} &
\includegraphics[height=3cm]{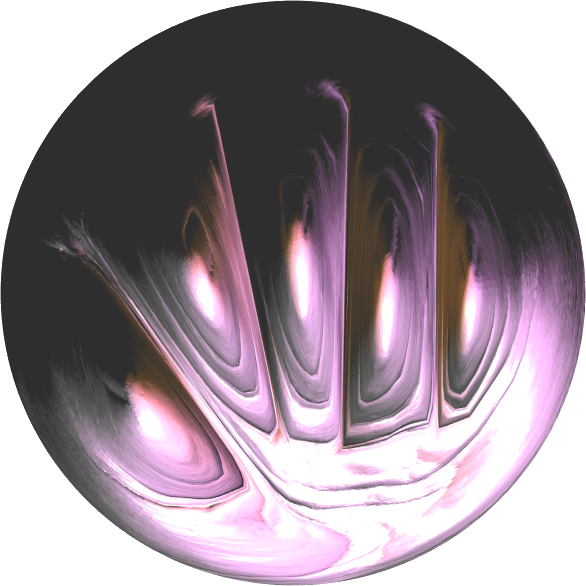} &&
\includegraphics[height=3cm]{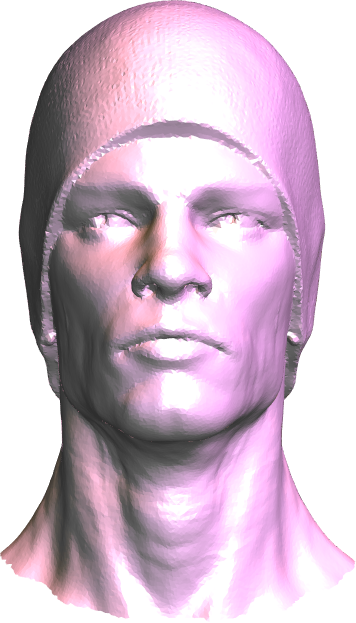} &
\includegraphics[height=3cm]{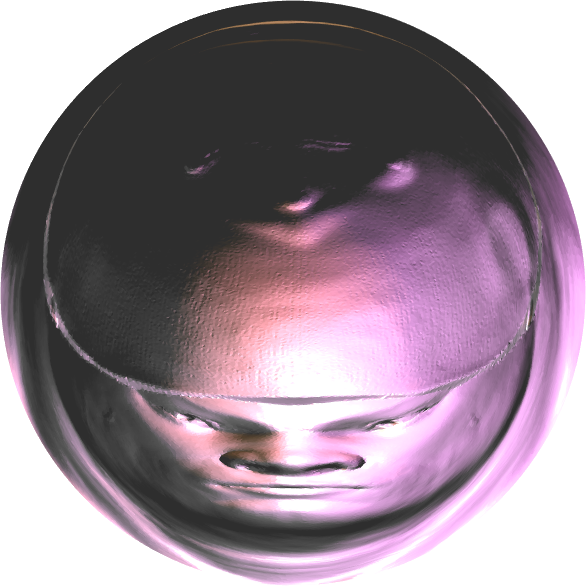} &&
\includegraphics[height=3cm]{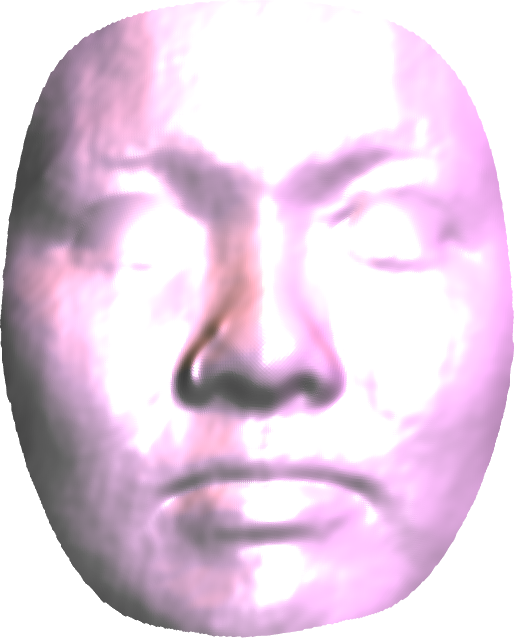} & 
\includegraphics[height=3cm]{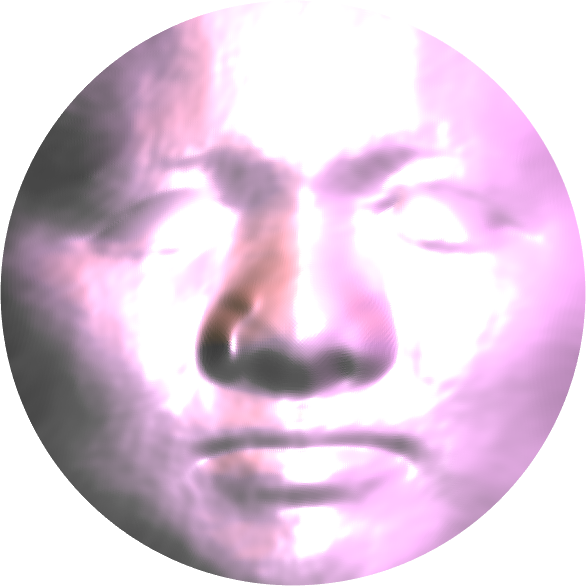} 
\\ \cline{1-2}\cline{4-5}\cline{7-8}
\\[-0.3cm]\cline{1-2}\cline{4-5}\cline{7-8}
\multicolumn{2}{c}{Bimba Statue} && \multicolumn{2}{c}{Buddha} && \multicolumn{2}{c}{Nefertiti Statue} \\
$\F(\M) = 433,040$ & $\V(\M) = 216,873$ && $\F(\M) = 945,722$ & $\V(\M) = 473,362$ && $\F(\M) = 1,992,801$ & $\V(\M) = 996,838$ \\
\includegraphics[height=3cm]{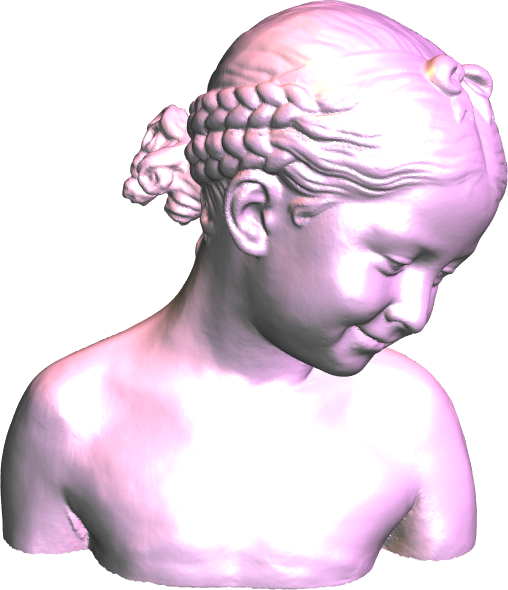} & 
\includegraphics[height=3cm]{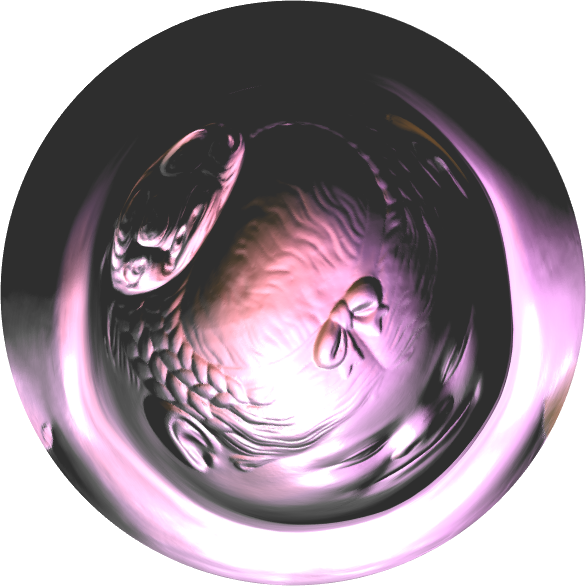} &&
\includegraphics[height=3cm]{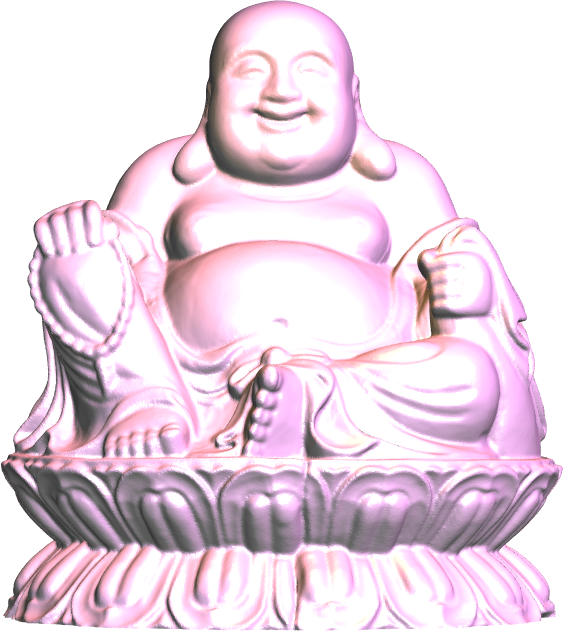} &
\includegraphics[height=3cm]{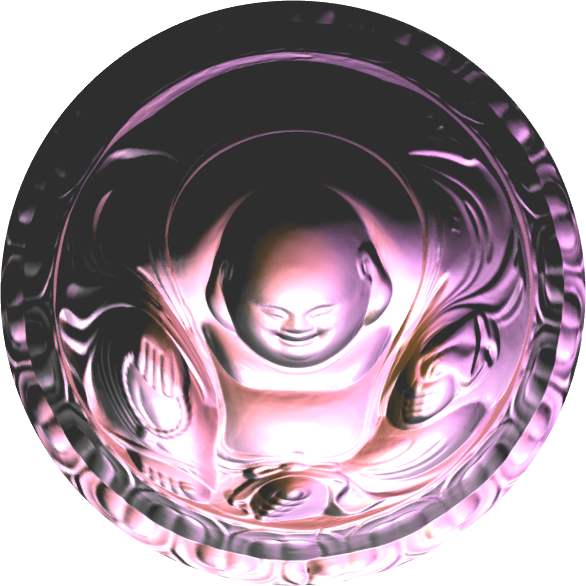} &&
\includegraphics[height=3cm]{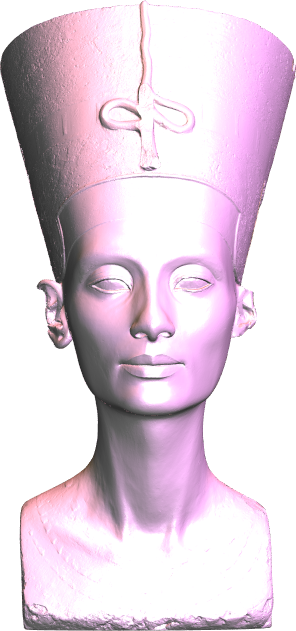} & 
\includegraphics[height=3cm]{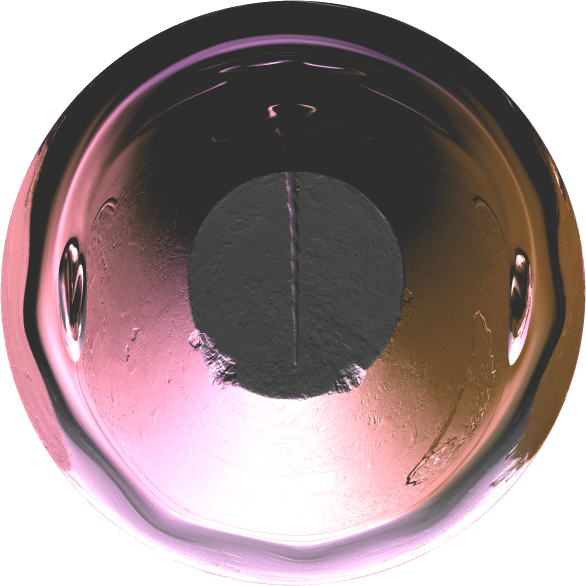} 
\\ \cline{1-2}\cline{4-5}\cline{7-8}
\end{tabular}
}
\caption{The benchmark triangular mesh models and their disk area-preserving parameterizations computed using the proposed nonlinear CG method of AEM, Algorithm \ref{alg:PCG}.}
\label{fig:MeshModel}
\end{figure}

In Figure \ref{fig:MeshModel}, we present images of area-preserving parameterizations computed by the proposed preconditioned nonlinear CG method of AEM, Algorithm \ref{alg:PCG}. It is important to emphasize that all parameterizations displayed in the figure are bijective, meaning no folded triangles occur in the resulting mappings. Consequently, each mapping can be utilized as a reliable surface parameterization for subsequent applications.

To quantify the local area distortions, we compute the standard deviation (SD) of the local area ratio, denoted as $R_A(f,\tau)$, defined as
\begin{equation} \label{eq:R_A}
R_A(f,\tau) = \frac{|f(\tau)|}{|\tau|},
\end{equation}
where $f$ is the computed area-preserving parameterization and $\tau$ represents a triangular face in the set $\F(\S)$. Evaluating the SD involves considering all triangular faces in $\F(\S)$. It is evident that an area-preserving mapping $f$ has the property that
$$
\underset{\tau\in\F(\S)}{\mathrm{mean}}R_A(f,\tau) = 1 
~\text{ and }~
\underset{\tau\in\F(\S)}{\mathrm{SD}}R_A(f,\tau) = 0. 
$$
Figure \ref{fig:hist} illustrates histograms of the distribution of area ratios $R_A(f,\tau)$, defined in \eqref{eq:R_A}, for the area-preserving parameterizations of benchmark mesh models computed by Algorithm \ref{alg:PCG}. The histograms reveal a concentrated distribution of area ratios around $1$, with radii measuring less than $0.2$. This observation indicates that the resulting mappings are nearly area-preserving.

\begin{figure}
\centering
\resizebox{\textwidth}{!}{
\begin{tabular}{cccc}
Foot & Chinese Lion & Femur & Stanford Bunny\\
\includegraphics[height=4.2cm]{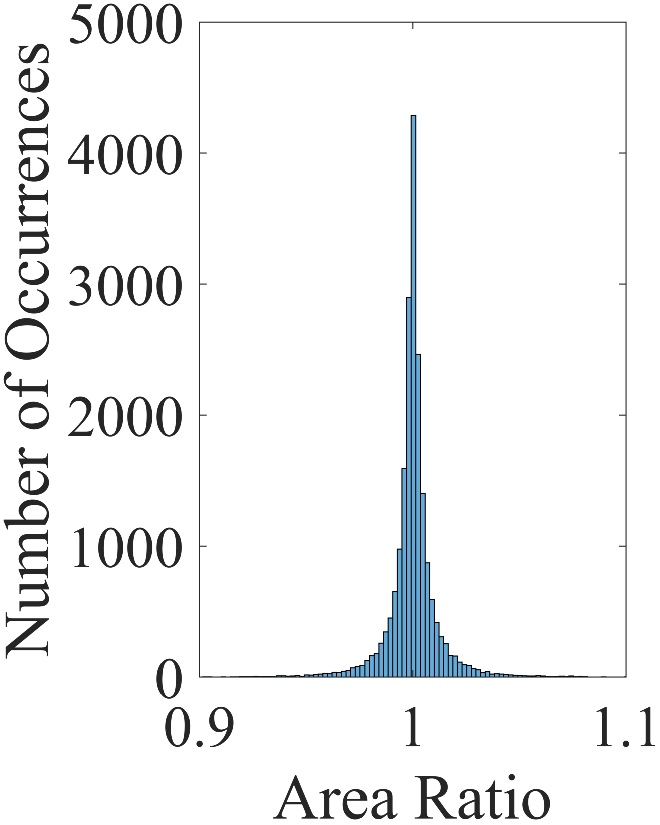} &
\includegraphics[height=4.2cm]{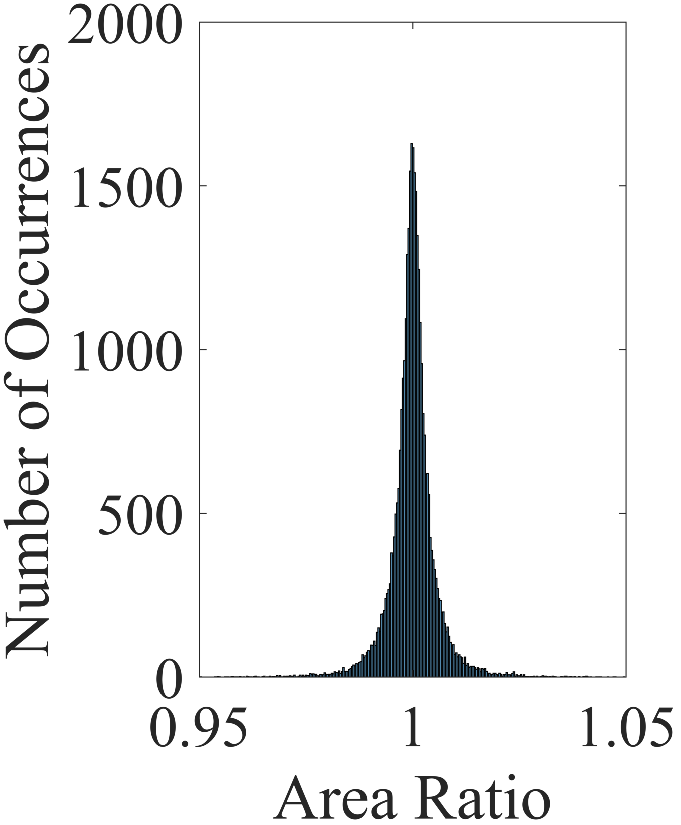} & 
\includegraphics[height=4.2cm]{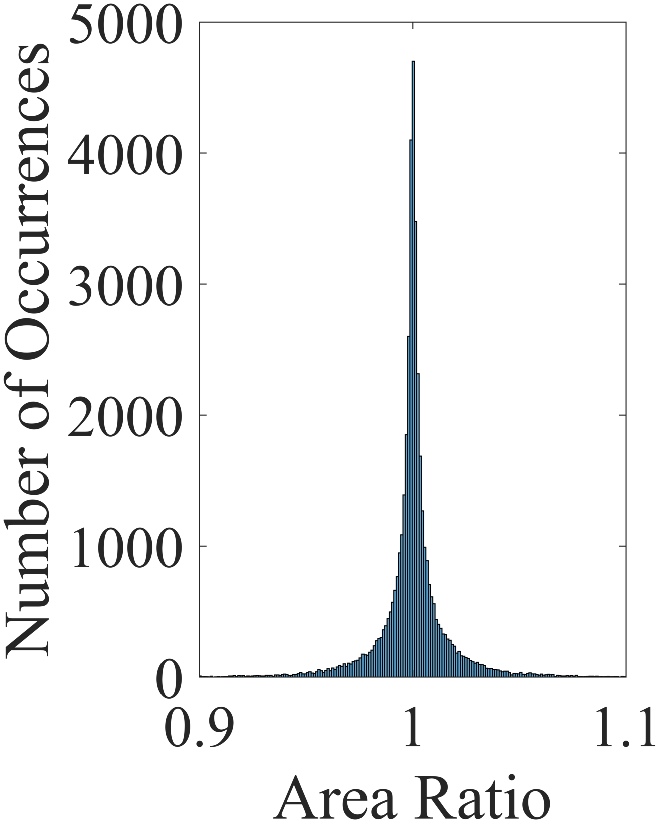} & 
\includegraphics[height=4.2cm]{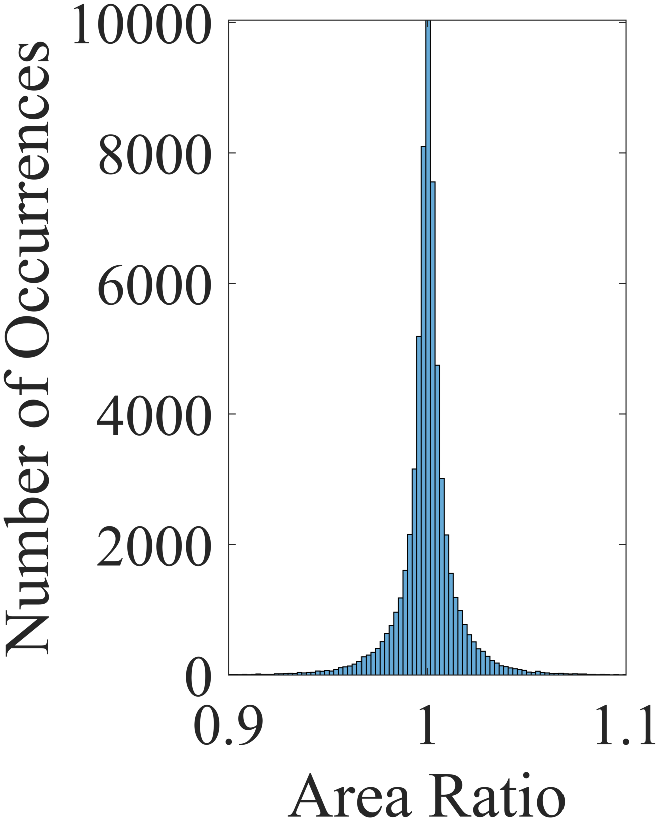}
\\[0.5cm]
Max Planck & Human Brain & Left Hand & Knit Cap Man\\
\includegraphics[height=4.2cm]{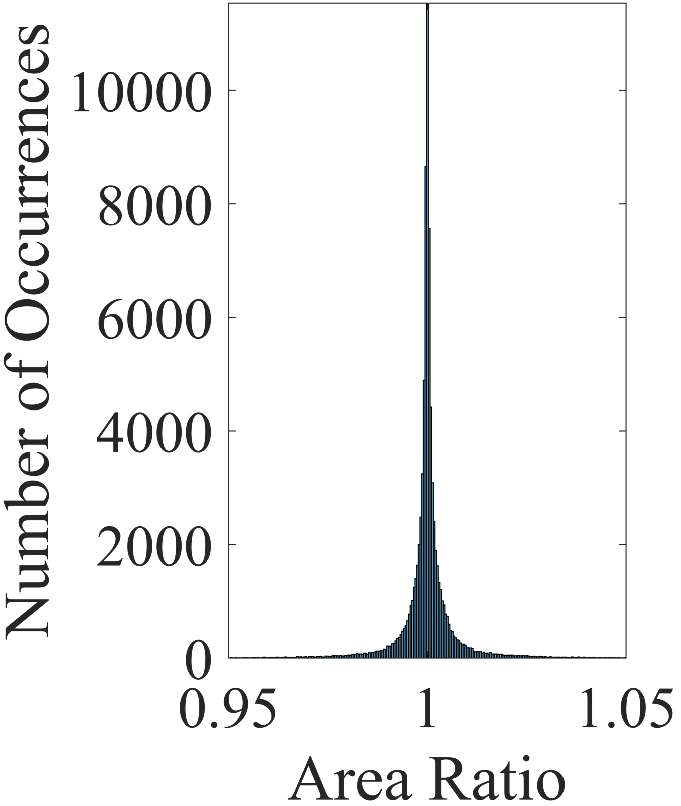} & 
\includegraphics[height=4.2cm]{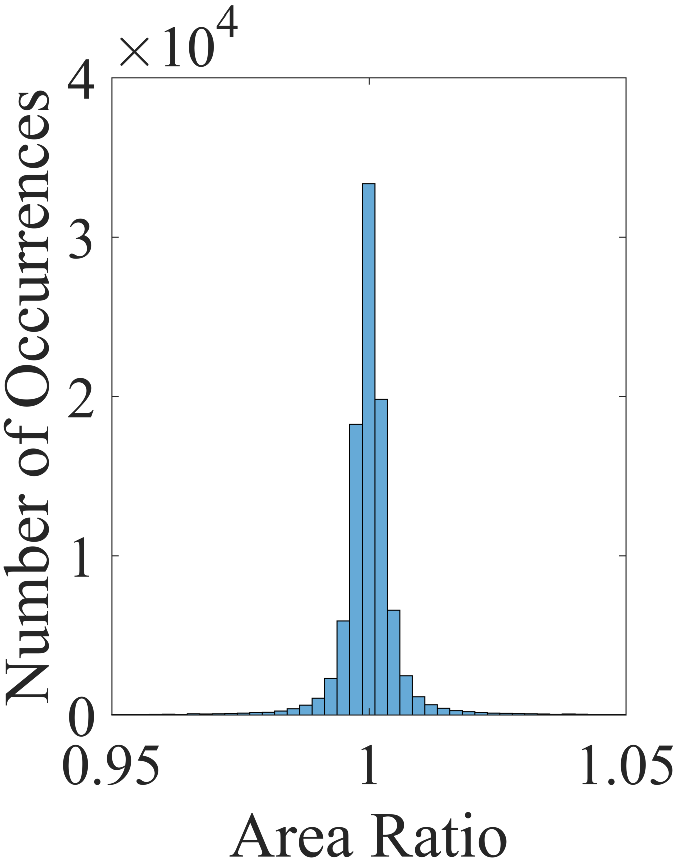} & 
\includegraphics[height=4.2cm]{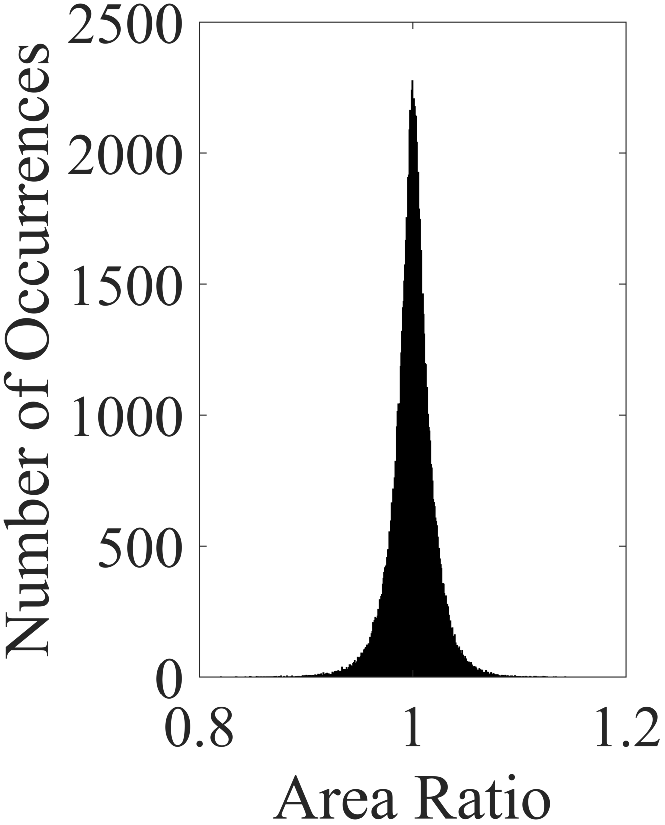} &
\includegraphics[height=4.2cm]{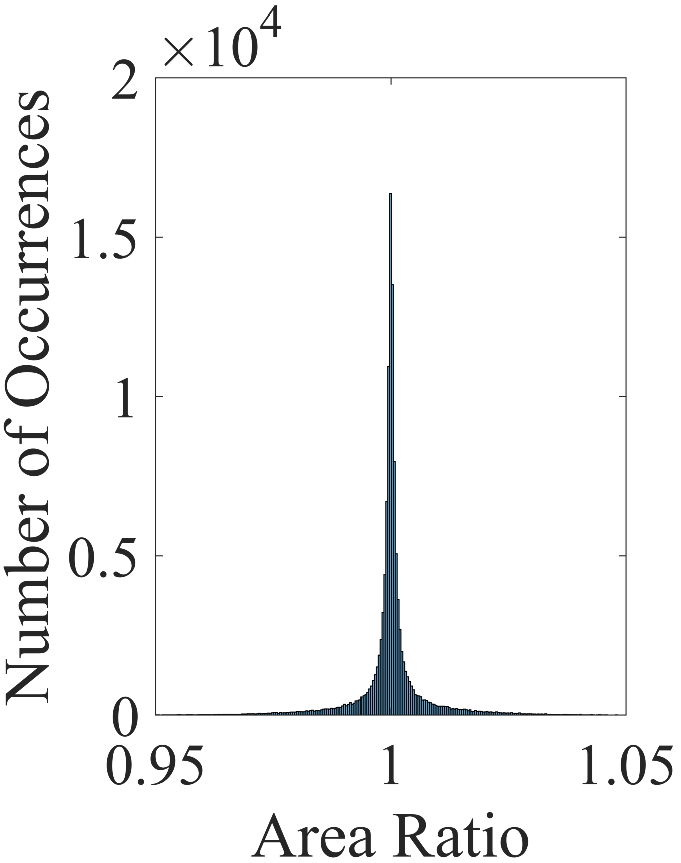} 
\\[0.5cm]
Human Face & Bimba Statue & Buddha & Nefertiti Statue \\
\includegraphics[height=4.2cm]{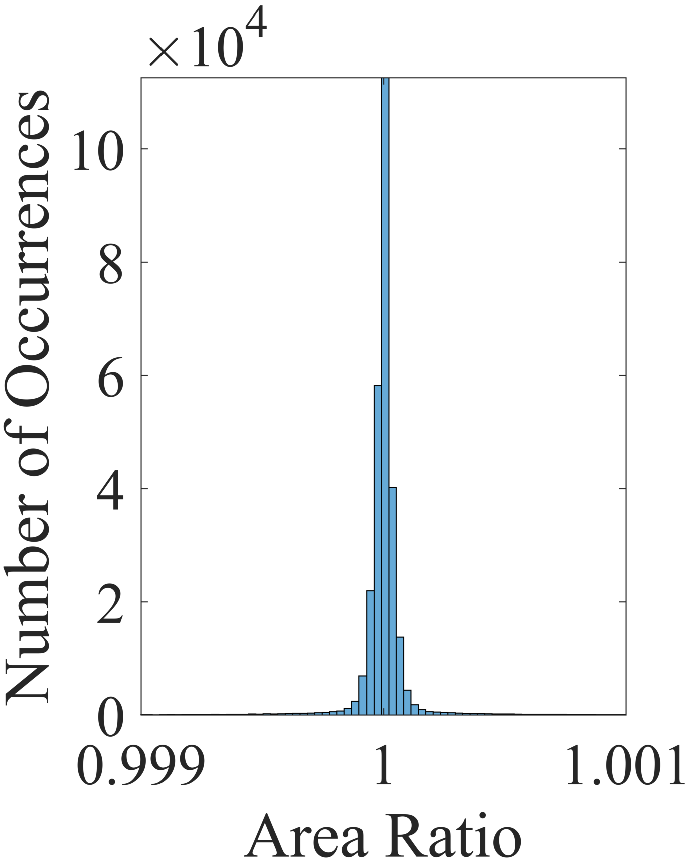} &
\includegraphics[height=4.2cm]{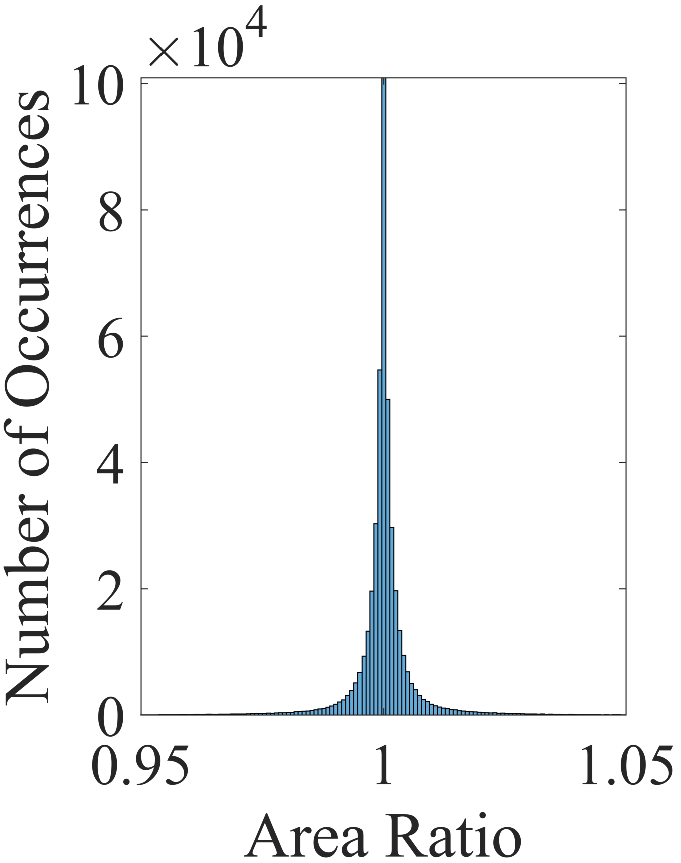} &
\includegraphics[height=4.2cm]{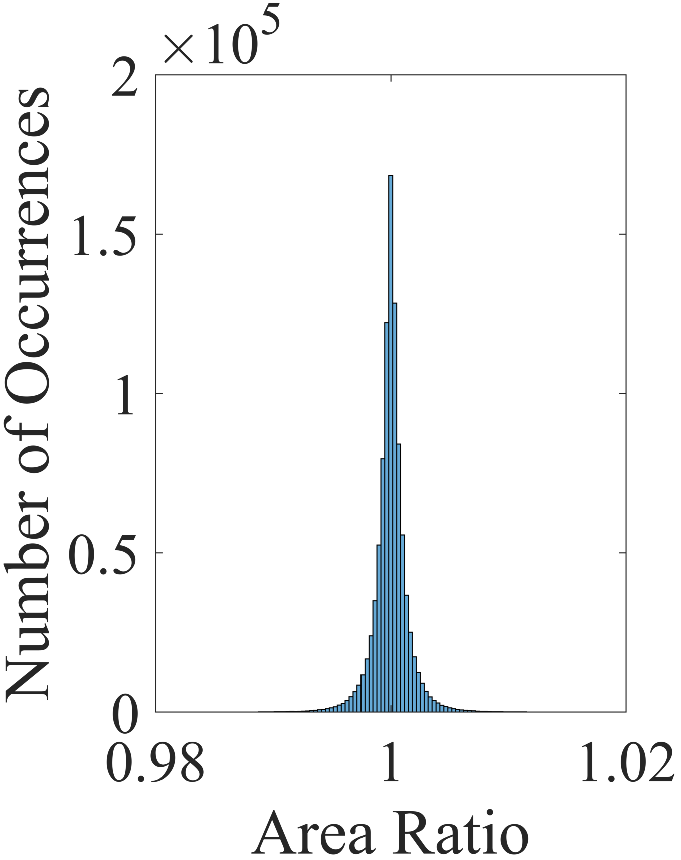} & 
\includegraphics[height=4.2cm]{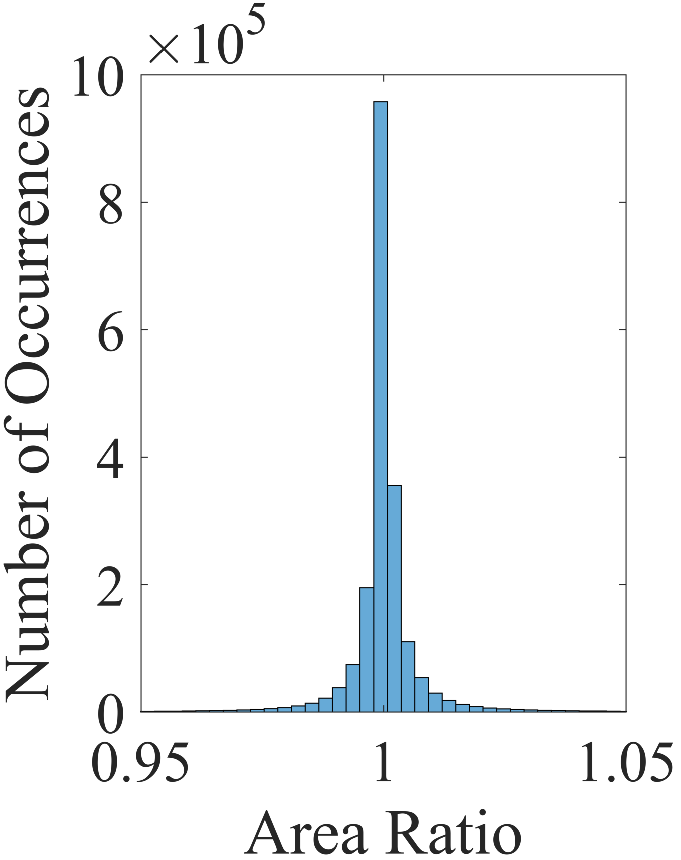}
\end{tabular}
}
\caption{Histograms of the area ratios $R_A(f,\tau)$, representing the local area distortions, for the area-preserving parameterizations of benchmark mesh models computed using the proposed nonlinear CG method, Algorithm \ref{alg:PCG}.}
\label{fig:hist}
\end{figure}

In Table \ref{tab:PCG}, we provide a comparison between the proposed nonlinear CG method of AEM, Algorithm \ref{alg:PCG}, for computing area-preserving parameterizations and a state-of-the-art fixed-point iterative algorithm of SEM \cite{YuLW19}. The number of iterations of both algorithms is set to $100$. The comparison is based on the criteria of area-preserving accuracy and computational efficiency. 
The global area distortion is measured by the normalized authalic energy defined as \eqref{eq:E_A_mod}. 
The normalized authalic energy \eqref{eq:E_A_mod} has the property that $\widetilde{E}_A(f) \geq 0$, and the equality holds if and only if $f$ is area-preserving \cite{Yueh22}. 
From Table \ref{tab:PCG}, we observe that the proposed nonlinear CG method, Algorithm \ref{alg:PCG}, exhibits notably lower levels of local and global area distortions, along with improved computational efficiency, when compared to the fixed-point method \cite{YuLW19}.

\begin{table}[]
\centering
\caption{The SD of area ratios, defined in \eqref{eq:R_A}, the normalized authalic energy $E_A$ specified in \eqref{eq:E_A_mod}, required computational time, and the number of folding triangles of area-preserving parameterizations using the fixed-point method \cite{YuLW19} and the nonlinear CG method, Algorithm \ref{alg:PCG} with $100$ iterations, respectively. 
}
\label{tab:PCG}
\resizebox{\textwidth}{!}{
\begin{tabular}{lccrcccrc}
\hline 
\multirow{3}{*}{Model name} & \multicolumn{4}{c}{Fixed-point method of SEM \cite{YuLW19}} & \multicolumn{4}{c}{Nonlinear CG method of AEM (Algorithm \ref{alg:PCG})} \\
\cline{2-9}
& Area ratios & \multirow{2}{*}{$E_A(f)$} & Time & \#Fold- & Area ratios & \multirow{2}{*}{$E_A(f)$} & Time & \#Fold- \\ 
 & {SD} & & {(secs.)} & ings & {SD} & & {(secs.)} & ings
\\
\hline 
Foot             & $4.81\times 10^{-2}$ & $7.49\times 10^{-3}$ &  2.3 & 3 & $1.47\times 10^{-2}$ & $5.08\times 10^{-4}$ &  1.7 & 0 \\ 
Chinese Lion     & $1.62\times 10^{-2}$ & $7.50\times 10^{-4}$ &  4.4 & 0 & $5.84\times 10^{-3}$ & $8.33\times 10^{-5}$ &  2.7 & 0 \\ 
Femur            & $4.41\times 10^{-2}$ & $5.92\times 10^{-3}$ &  6.6 & 0 & $1.92\times 10^{-2}$ & $1.05\times 10^{-3}$ &  4.4 & 0 \\ 
Stanford Bunny   & $3.92\times 10^{-2}$ & $4.77\times 10^{-3}$ &  8.8 & 0 & $1.66\times 10^{-2}$ & $8.30\times 10^{-4}$ &  5.1 & 0 \\ 
Max Planck       & $1.65\times 10^{-2}$ & $8.45\times 10^{-4}$ & 12.6 & 0 & $8.02\times 10^{-3}$ & $1.96\times 10^{-4}$ &  6.5 & 0 \\ 
Human Brain      & $3.52\times 10^{-2}$ & $3.78\times 10^{-3}$ & 14.4 & 0 & $1.41\times 10^{-2}$ & $5.26\times 10^{-4}$ &  6.9 & 0 \\ 
Left Hand        & $4.51\times 10^{-2}$ & $6.39\times 10^{-3}$ & 17.2 & 0 & $2.00\times 10^{-2}$ & $1.19\times 10^{-3}$ &  7.7 & 0 \\ 
Knit Cap Man     & $1.77\times 10^{-2}$ & $9.86\times 10^{-4}$ & 19.1 & 0 & $8.02\times 10^{-3}$ & $1.87\times 10^{-4}$ &  9.1 & 0 \\ 
Human Face       & $1.11\times 10^{-3}$ & $3.66\times 10^{-6}$ & 30.2 & 0 & $2.10\times 10^{-4}$ & $1.28\times 10^{-7}$ & 17.3 & 0 \\ 
Bimba Statue     & $1.52\times 10^{-2}$ & $7.09\times 10^{-4}$ & 76.0 & 0 & $7.61\times 10^{-3}$ & $1.64\times 10^{-4}$ & 35.0 & 0 \\ 
Buddha           & $3.90\times 10^{-3}$ & $4.50\times 10^{-5}$ &191.0 & 0 & $1.61\times 10^{-3}$ & $7.23\times 10^{-6}$ & 90.4 & 0 \\ 
Nefertiti Statue & $2.27\times 10^{-2}$ & $3.60\times 10^{-3}$ &336.2 & 0 & $1.33\times 10^{-2}$ & $9.33\times 10^{-4}$ &145.5 & 0 \\
\hline 
\end{tabular}
}
\end{table}

The values of SDs of area ratios and authalic energies in Table \ref{tab:PCG} are visualized in Figure \ref{fig:Comparison} (a) and (b), respectively. We observe that the maps computed by the proposed nonlinear CG method have significantly smaller values of SDs of area ratios and authalic energies than that of the fixed-point method \cite{YuLW19}, which indicates that the nonlinear CG method has better area-preserving accuracy.

\begin{figure}
\centering
\resizebox{\textwidth}{!}{
\begin{tabular}{cc}
\includegraphics[height=4.5cm]{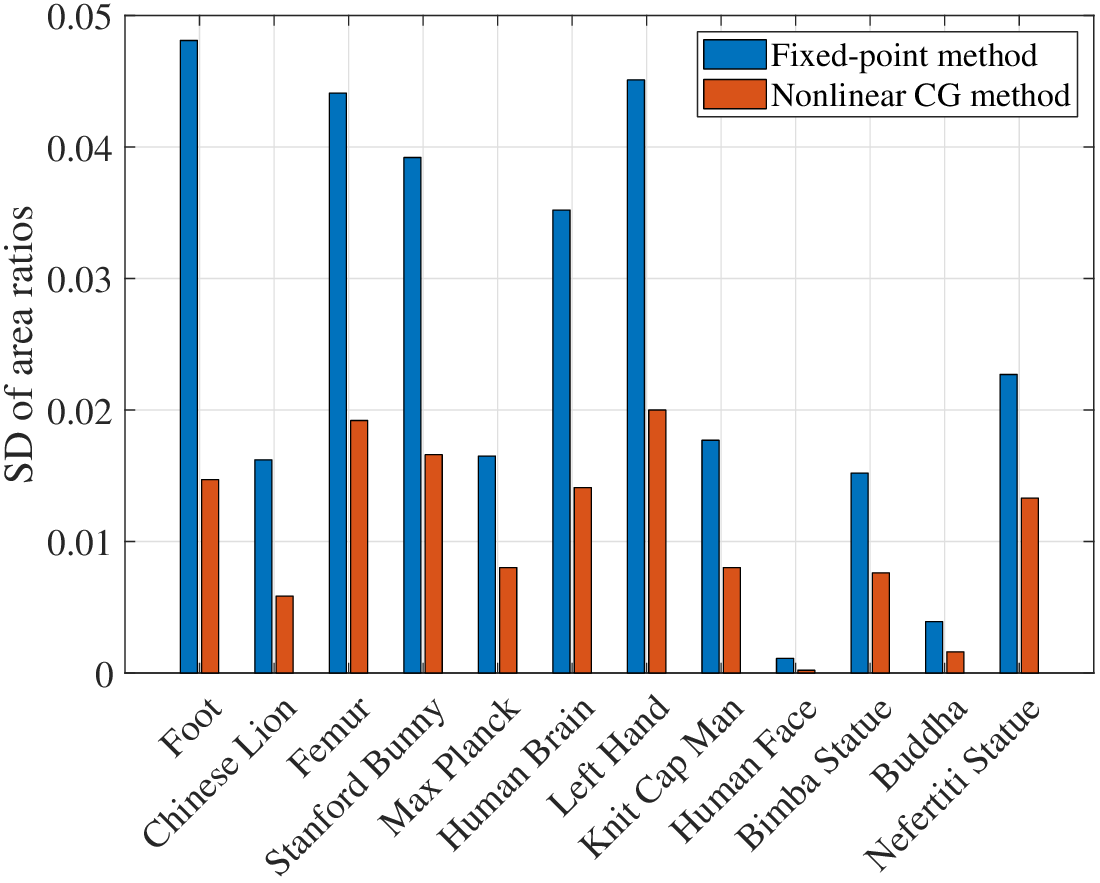} &
\includegraphics[height=4.5cm]{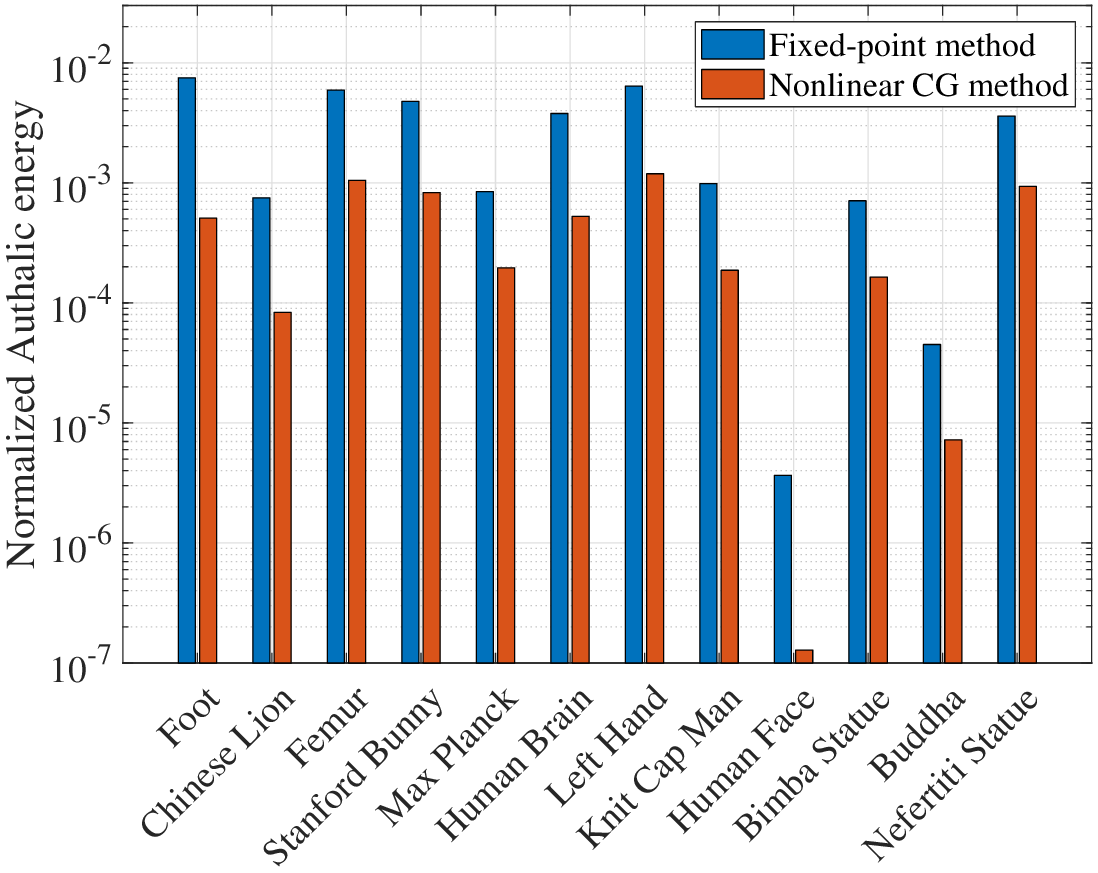} \\
(a) & (b)
\end{tabular}
}
\caption{The comparison of (a) SD of area ratios, defined in \eqref{eq:R_A}, and (b) the normalized authalic energy, defined in \eqref{eq:E_A_mod}, of the fixed-point method \cite{YuLW19} and the nonlinear CG method (Algorithm \ref{alg:PCG}).}
\label{fig:Comparison}
\end{figure}

Figure \ref{fig:Time} further shows the relationship between the numbers of vertices and the computational time cost of the fixed-point method \cite{YuLW19} and the proposed nonlinear CG method, Algorithm \ref{alg:PCG}. We observe that the efficiency of the proposed method for the AEM is significantly improved compared to the fixed-point method \cite{YuLW19}. This improvement results in a computation time of less than 150 seconds for an area-preserving parameterization of a mesh with about 1 million vertices, which is reasonably satisfactory.

Comparisons between the fixed-point method of SEM \cite{YuLW19} and the other two state-of-the-art algorithms, the stretch-minimizing method \cite{YoBS04} and the optimal mass transportation-based method \cite{SuCQ16}, can be found in \cite{YuLW19}, which indicates that the SEM \cite{YuLW19} was already the best among the three on computing disk area-preserving parameterizations.

\begin{figure}
\centering
\includegraphics[height=6cm]{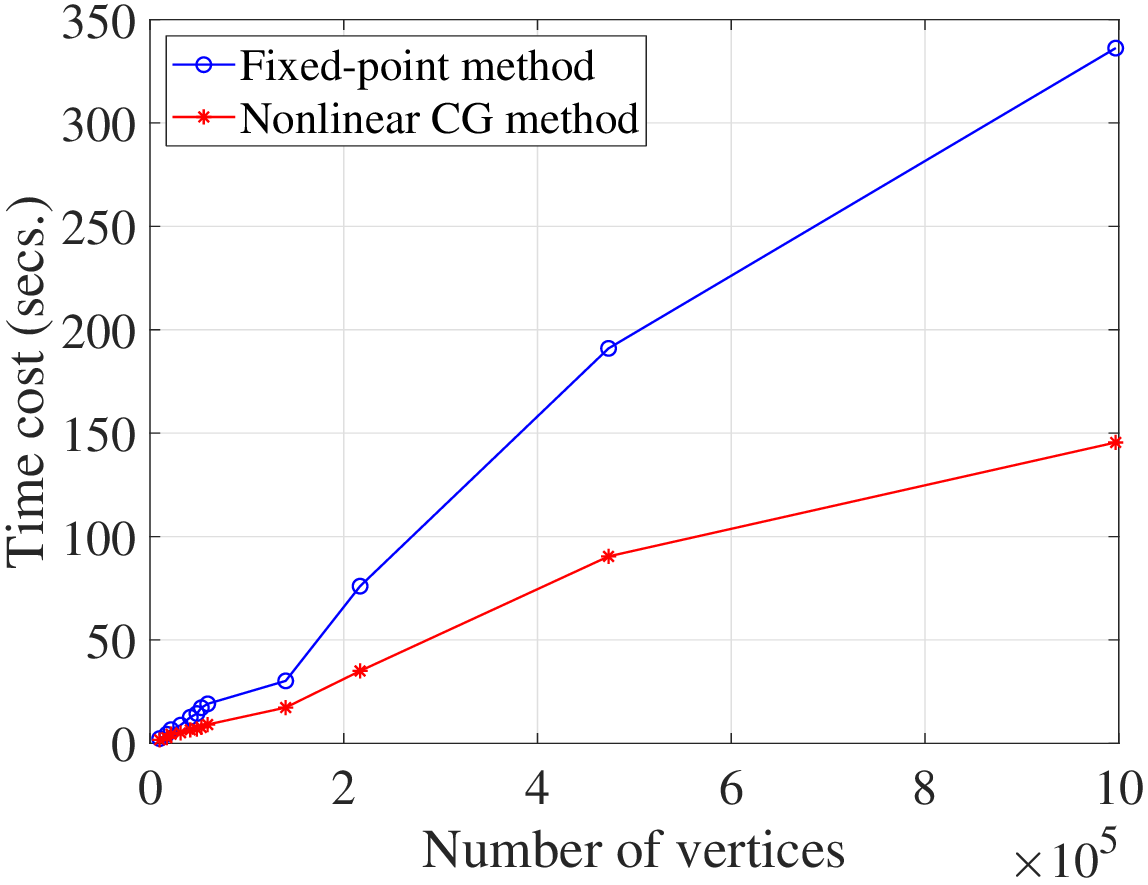}
\caption{The relationship between numbers of vertices and the computational time cost of the fixed-point method \cite{YuLW19} and the proposed nonlinear CG method (Algorithm \ref{alg:PCG})}
\label{fig:Time}
\end{figure}

The MATLAB executables with a user-friendly interface for the nonlinear CG method of the AEM (Algorithm \ref{alg:PCG}) can be found in \url{http://tiny.cc/DiskAEM}.

\section{Application to area-preserving surface registration}
\label{sec:6}

In this section, we demonstrate a straightforward application of the proposed preconditioned CG method of AEM, Algorithm \ref{alg:PCG}, for the registration task of surfaces. 
Surface registration is a widely studied problem in geometry processing \cite{LaLu14,LuLY14,YoMY14,YuLW19}, but only a few studies have addressed surface registration with area-preserving properties.

In particular, we consider the registration task of two simply connected open surfaces $\S$ and $\T$ with landmark pairs $\{(p_i,q_i) \mid p_i\in\S, \, q_i\in\T\}_{i=1}^m$. The task aims to find a bijective mapping $\varphi:\S\to\T$ that satisfies $\varphi(p_i)\approx q_i$, for $i=1, \ldots, m$. By applying the disk area-preserving parameterizations $f:\S\to\D$ and $g:\T\to\D$, the registration task is simplified into finding a bijective mapping $h:\D\to\D$ that satisfies $h\circ f(p_i) \approx g(q_i)$, for $i=1, \ldots, m$. Once we have such a map $h$, the registration mapping from $\S$ to $\T$ is constructed by $\varphi = g^{-1} \circ h \circ f$.

We suppose the triangular mesh $\S$ consists of $n$ vertices. The objective function for the registration task is formulated as a stretch energy with a penalty term given by
$$
\widetilde{E}_S(h) = E_S(h) + \lambda \sum_{i=1}^m \| h\circ f(p_i) - g(q_i) \|^2,
$$
where $\lambda$ is a positive real parameter. The gradient of $\widetilde{E}_S$ can be formulated as
\begin{align}
\nabla\widetilde{E}_S(h) &= 2 \left( (I_2\otimes L_S(h)) \, \h + \lambda (I_2\otimes P)^\top( (I_2\otimes P) \,\h - \g) \right) \nonumber \\
&= 2\left( I_2\otimes (L_S(h) +\lambda P^\top P)  \right) \h - 2\lambda (I_2\otimes P)^\top \g, \label{eq:RegGrad}
\end{align}
where $P$ is a permutation submatrix of size $m$-by-$n$ with 
\begin{equation*}
P_{i,j} =
\begin{cases}
1 & \text{if the $i$th landmark of $\S$ is $j$}, \\
0 & \text{otherwise}.
\end{cases}
\end{equation*}
The minimizer of $\widetilde{E}_S$ satisfies $\nabla\widetilde{E}_S(h) = \0$, i.e., the nonlinear system
\begin{equation} \label{eq:RegLS}
\left( I_2\otimes (L_S(h) +\lambda P^\top P)  \right) \h = \lambda (I_2\otimes P)^\top \g.
\end{equation}
Under a prescribed boundary map $\h_\B = \b$, the nonlinear system \eqref{eq:RegLS} becomes
\begin{equation} \label{eq:RegLS1}
\big( [L_S(h)]_{\I,\I} +\lambda D_{\I,\I} \big) \,\h^\ell_\I = \lambda  [P^\top \g^\ell]_\I - \big( [L_S(h)]_{\I,\B} +\lambda D_{\I,\B} \big) \,\b^\ell, ~ \ell=1,2,
\end{equation}
where $D = P^\top P$ is a diagonal matrix with
$$
D_{i,i} = 
\begin{cases}
1 & \text{ if $i$th vertices is a landmark},\\
0 & \text{ otherwise}.
\end{cases}
$$
The nonlinear system \eqref{eq:RegLS1} can be solved by the fixed-point iteration
\begin{equation} \label{eq:RegIter}
\big( [L_S(h^{(k)})]_{\I,\I} +\lambda D_{\I,\I} \big) \,{\h_\I^\ell}^{(k+1)} = \lambda [P^\top \g^\ell]_\I - \big( [L_S(h^{(k)})]_{\I,\B} +\lambda D_{\I,\B} \big) \,\b^\ell,
\end{equation}
$\ell=1,2$, with $L_S(h^{(0)}) = L_S(\mathrm{id})$. 
To effectively solve the minimization problem 
\begin{equation} \label{eq:RegProblem}
h = \argmin_{\f_\B = \b} \widetilde{E}_S(f),
\end{equation}
we apply Algorithm \ref{alg:PCG} by replacing the gradient direction with \eqref{eq:RegGrad}.

In practice, we demonstrate the registration mapping from the model Max Planck to Knit Cap Man. First, we compute area-preserving parameterizations $f$ and $g$ of Max Planck and Knit Cap Man surface models, respectively, using Algorithm \ref{alg:PCG}.
Then, we define feature curves of Max Planck and Knit Cap Man models on the disk parameter domains, as illustrated in Figure \ref{fig:Reg} (a) and (c). 
Next, we solve the minimization problem \eqref{eq:RegProblem}. 
The boundary map is chosen to be the optimal rotation with respect to the landmarks as 
$$
R = \argmin_{R\in SO(2)} \sum_{i=1}^m \| R(p_i) - q_i \|^2,
$$
which can be solved by the singular value decomposition of a 2-by-2 matrix \cite{SoAl07}. 
The initial map is computed by the fixed-point iteration \eqref{eq:RegIter} with $5$ iteration steps. 
The computed registration mapping is presented in Figure \ref{fig:Reg} (b). It is worth noting that the map $h$ is bijective with authalic energy $E_A(h) = 5.04\times 10^{-4}$, and the positions of feature curves in Figure \ref{fig:Reg} (b) and (c) are identical up to an average error of $10^{-3}$, which means the registration mapping $h$ preserves the area of the surface and keeps the specified facial features aligned well.

\begin{figure}
\centering
\resizebox{\textwidth}{!}{
\begin{tabular}{ccc}
\multicolumn{2}{c}{Max Planck} & Knit Cap Man \\
Parameterization & Registration map & Parameterization \\
\includegraphics[height=5cm]{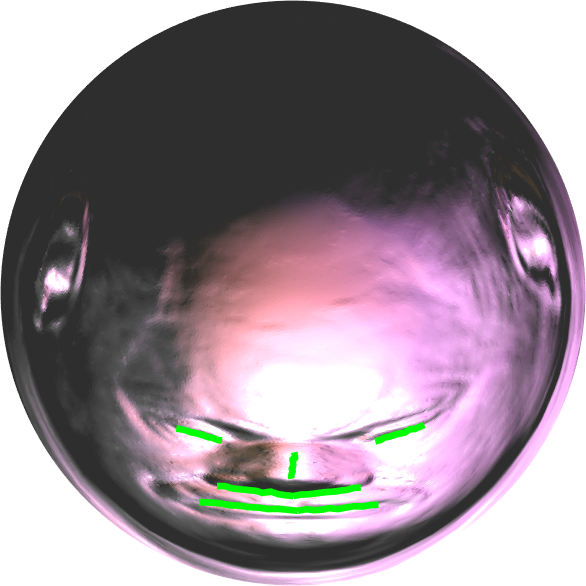} &
\includegraphics[height=5cm]{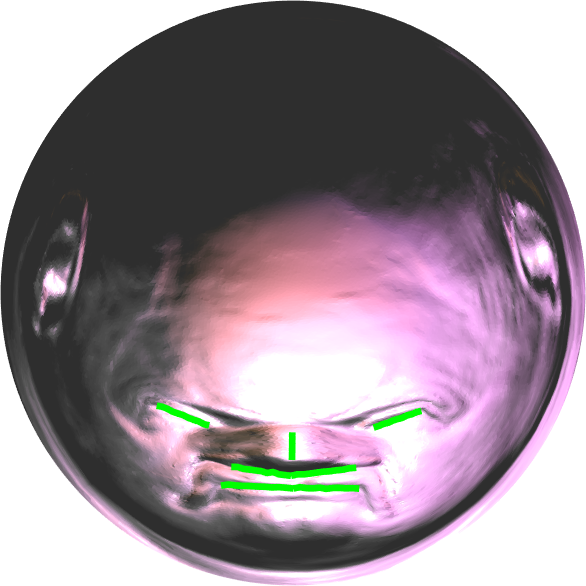} &
\includegraphics[height=5cm]{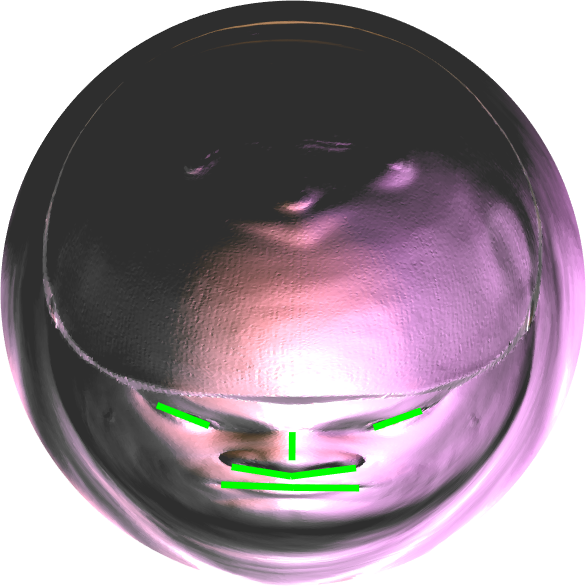} \\
(a) & (b) & (c)
\end{tabular}
}
\caption{The feature curves on the disk area-preserving parameter domains of (a) Max Planck and (c) Knit Cap Man, respectively, and (b) on the disk area-preserving registration map of Max Planck.}
\label{fig:Reg}
\end{figure}

In Figure \ref{fig:Homotopy}, we further show the linear homotopy $\mathcal{H}:\D\times[0,1]\to\R^3$ between the maps $f^{-1}$ and $g^{-1}\circ h$, defined as
\begin{equation} \label{eq:Homotopy}
\mathcal{H}(v,t) = (1-t) \, f^{-1}(v) + t \, (g^{-1}\circ h)(v).
\end{equation}
We observe that the homotopy carried out by the registration map $h$ effectively maintains the alignment of facial features of two surface models.

\begin{figure}
\centering
\resizebox{\textwidth}{!}{
\begin{tabular}{cccccc}
Max Planck &&&&& Knit Cap Man \\
\includegraphics[height=3.4cm]{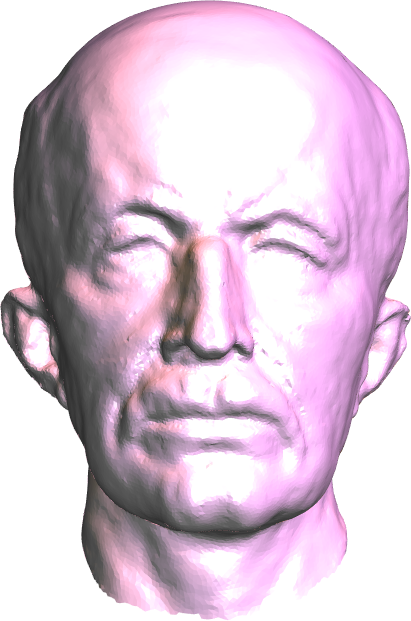} &
\includegraphics[height=3.4cm]{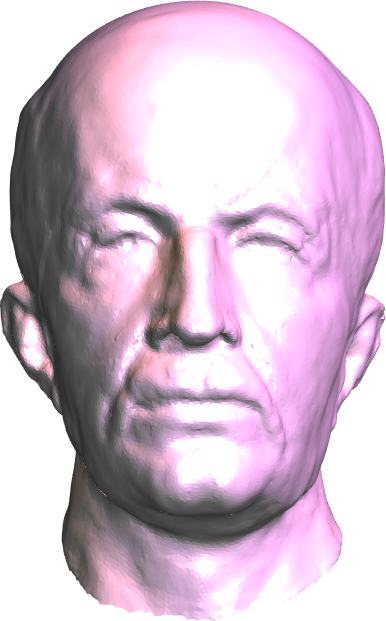} &
\includegraphics[height=3.4cm]{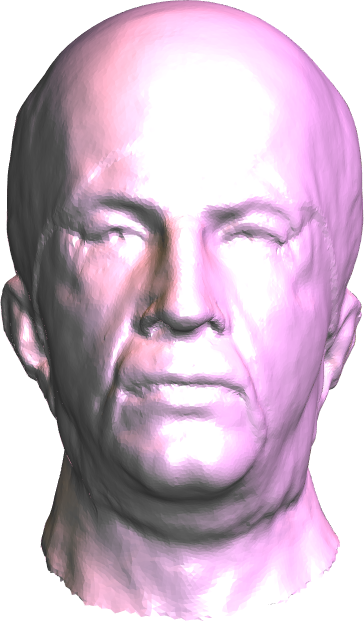} &
\includegraphics[height=3.4cm]{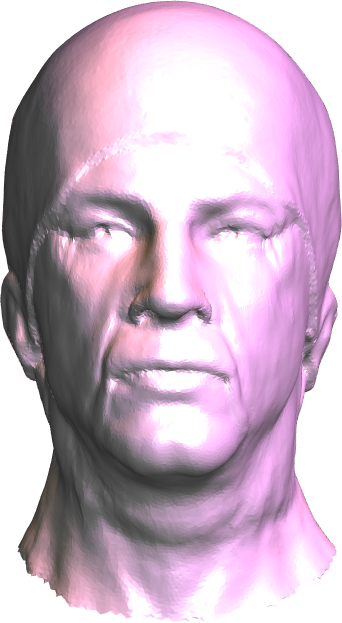} &
\includegraphics[height=3.4cm]{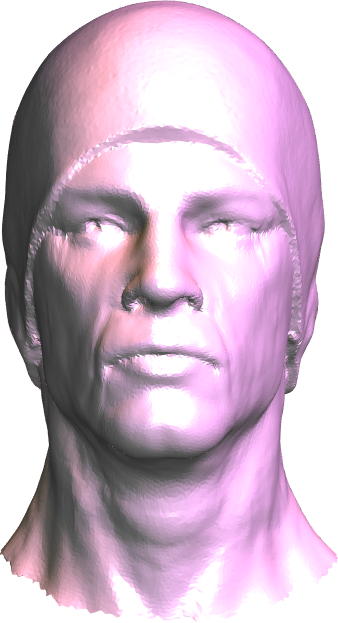} &
\includegraphics[height=3.4cm]{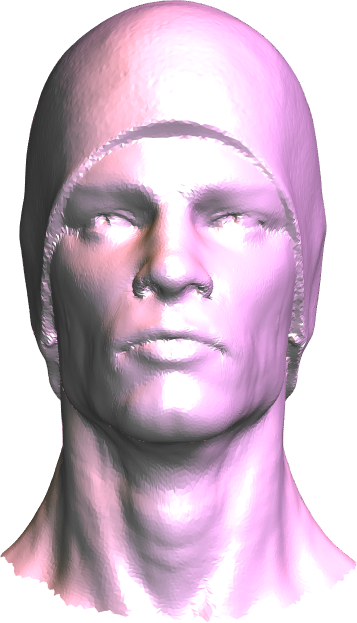} \\
$\mathcal{H}(\D,0)$ & $\mathcal{H}(\D,0.2)$ & $\mathcal{H}(\D,0.4)$ & $\mathcal{H}(\D,0.6)$ & $\mathcal{H}(\D,0.8)$ & $\mathcal{H}(\D,1)$
\end{tabular}
}
\caption{The images of the linear homotopy \eqref{eq:Homotopy} from Max Planck to Knit Cap Man.}
\label{fig:Homotopy}
\end{figure}

\section{Concluding remarks}
\label{sec:7}

We have presented a new preconditioned nonlinear CG method of AEM for the computation of area-preserving parameterizations. The effectiveness of the proposed algorithm is demonstrated to be significantly better than another state-of-the-art algorithm in terms of area-preserving accuracy and computational efficiency. In addition, we have provided rigorous proof for the convergence of the proposed algorithm. Furthermore, a straightforward application of surface registration is presented. Such encouraging results make the AEM more valuable in the practical computation of area-preserving mappings.

\section*{Acknowledgements}
This work was partially supported by the National Science and Technology Council and the National Center for Theoretical Sciences.

\bibliographystyle{abbrv}
\bibliography{references}

\end{sloppypar}
\end{document}